\newtheorem{theorem}{Theorem}
\newtheorem{proposition}{Proposition}
\newtheorem{corollary}{Corollary}
\newtheorem{lemma}{Lemma}
\newcommand{\Ar}{\operatorname{arctg}}
\newcommand{\Arg}{\operatorname{arg}}
\newcommand{\Lin}{\operatorname{Lin}}
\newcommand{\Arcsin}{\operatorname{arcsin}}
\newcommand{\R}{\operatorname{Re}}
\newcommand{\I}{\operatorname{Im}}
\newcommand{\sgn}{\operatorname{sgn}}
\begin{document}
\begin{flushleft}
УДК 519.46 + 514.763 + 512.81 + 519.9 + 517.911
\end{flushleft}
\begin{flushleft}
MSC 22E30, 49J15, 53C17
\end{flushleft}

\title[Sub-Riemannian distance on $SU(2)$ and $SO(3)$]{Sub-Riemannian distance on Lie groups $SU(2)$ and $SO(3)$}
\author{V.\,N.\,Berestovskii, I.\,A.\,Zubareva,}
\thanks{The work is partially supported by the Russian Foundation for Basic Research (Grant 14-01-00068-a), a grant of the Government of the Russian federation for the State Support of Scientific Research (Agreement 14.B25.31.0029), and State
Maintenance Program for the Leading Scientific Schools of the Russian Federation (Grant NSh-2263.2014.1) }
\address{V.N.Berestovskii}
\address{Sobolev Institute of Mathematics SD RAS, Novosibirsk}
\email{berestov@inbox.ru}
\address{I.A.Zubareva}
\address{Sobolev Institute of Mathematics SD RAS, Omsk Branch}
\email{i\_gribanova@mail.ru}
\maketitle
\maketitle {\small
\begin{quote}
\noindent{\sc Abstract.}
The authors compute distances between arbitrary elements of Lie groups
$SU(2)$ and $SO(3)$ for special left-invariant sub-Riemannian metrics $\rho$ and $d.$ To compute distances for the second metric, we essentially use the fact that canonical two-sheeted covering epimorphism $\Omega$ of the Lie group $SU(2)$ onto the Lie group $SO(3)$ is submetry and local isometry with respect to metrics 
$\rho$ and $d.$ Proofs are based on previously known formulas for geodesics with
origin at the unit, F.~Klein's formula for $\Omega$, trigonometric functions and relations between them, usual Calculus for functions of one real variable. But it is required sufficiently careful application of this simple tool.
\end{quote}}

{\small
\begin{quote}
\textit{Keywords and phrases:} distance, geodesic, Lie algebra, Lie group,
invariant sub-Riemannian metric, shortest arc.
\end{quote}}

\section*{Introduction}

In this paper, we find distances between arbitrary elements of Lie groups
$SU(2)$ and $SO(3)$ supplied with left-invariant sub-Riemannian metrics $\rho$ and 
$d$, which are also invariant under right shifts by elements of a subgroup $SO(2)$
of the groups $SU(2)$ and $SO(3).$ Additionally, the canonical two-sheeted covering epimorphism $\Omega$ of the Lie group $SU(2)$ onto the Lie group $SO(3)$ is a submetry \cite{BG} and local isometry relative to metrics $\rho$ and $d.$ This fact
plays essential role and permits, though indirectly and after not simple
computation, to find distances in $(SO(3),d)$ using distances in 
$(SU(2),\rho).$

We apply in computations formulas from \cite{BR} (see also \cite{BR1}) for geodesics with origin at the unit on $(SU(2),\rho)$, F.~Klein's formula from \cite{Kl} for $\Omega$, trigonometric functions and relations between them, usual Calculus for functions of one real variable. But it is necessary to apply these simple means rather 
delicately.

Let us note that with the exception of very rare cases, it is unknown how to compute exact distances between points even for homogeneous Riemannian manifolds. By author's opinion, this is impossible in principle in overwhelming majority of cases for homogeneous (sub-)Riemannian manifolds even of small dimension.

The problem under consideration is closely connected with the search problems of
shortest arcs and exact forms of spheres in homogeneous (sub-)Riemannian manifolds. It is even possible that solution of any of these problems permits to solve
somehow more or less easily other two problems.

Exact forms of spheres for left-invariant sub-Riemannian metrics on three-dimensio\-nal Heisenberg group and special left-invariant sub-Riemannian metrics
on Lie groups $SL_2(\mathbb{R})/(\pm E_2)$ and $SO(3)$ are found in paper \cite{BerZ}. Geodesics and shortest arcs of these metrics on the last two
Lie groups are found respectively in papers \cite{Ber1} and \cite{BerZub}.

This paper corrects a mistake in formulas for distances
on $(SU(2),\rho)$ from \cite{BR}; the cut and conjugate locus for 
$(SU(2),\rho)$ and $(SO(3),d)$ are found correctly in \cite{BR}. In more detail, 
we discuss this in section \ref{mis} of this paper.

\section{Preliminaries}

The following statement is proved in theorem 4 from paper \cite{Ber}.

\begin{theorem}
\label{general}
Let $\gamma=\gamma(t),$ $t\in [0,a]\subset \mathbb{R},$ be a parametrized by arclength normal geodesic, i.e. locally shortest arc, on a connected Lie group
$G$ with left-invariant sub-Riemannian metric $d$, defined by completely nonholonomic left-invariant distribution $D$ and a scalar product 
$\langle\cdot,\cdot\rangle$ on $D(e);$ a scalar product $(\cdot,\cdot)$ on
the Lie algebra $\frak{g}$ of Lie group $G$ is chosen so that its restrition to
$D(e)$ coincides with $\langle\cdot,\cdot\rangle$. Then $\gamma=\gamma(t)$ satisfies the following system of ODE
\begin{equation}
\label{dxsp2}
\stackrel{\cdot}\gamma(t)=\gamma(t)u(t),\,\,u(t)\in D(e)\subset \frak{g},\,\,\langle u(t),u(t)\rangle\equiv 1,
\end{equation}
\begin{equation}
\label{vot2}
\psi=\psi(t)=u(t)+v(t), v=v(t)\in \mathfrak{g}\cap D(e)^{\perp},
\end{equation}
\begin{equation}
\label{vot3}
(\stackrel{\cdot}\psi(t),w)=(\psi(t),[u(t),w]), w\in \mathfrak{g},
\end{equation}
where $\psi=\psi(t)\in \mathfrak{g},$ and hence, $u=u(t),$ $v=v(t)\in \frak{g},$ $(v(t),D(e))\equiv 0,$ $t\in [0,a],$ are some real-analytic vector functions.
\end{theorem}

\begin{corollary}
\label{cor}
If in the Lie group $G$ with left-invariant sub-Riemannian metric $d$ two points
are joined by two different parametrized by arclength normal geodesic of
equal length, then any of these geodesics either is not shortest arc or is not  
a part of longer shortest arc.
\end{corollary}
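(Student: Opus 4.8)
\emph{Proof proposal.} The plan is to argue by contradiction, exploiting two facts: a subarc of a shortest arc is again a shortest arc, and every normal geodesic is real-analytic (this is part of the content of Theorem \ref{general}, since $\gamma$ solves $\dot\gamma=\gamma u$ with real-analytic $u$). Denote the two given geodesics by $\gamma_1,\gamma_2\colon[0,a]\to G$, so that $\gamma_1(0)=\gamma_2(0)=:p$, $\gamma_1(a)=\gamma_2(a)=:q$, and $\gamma_1\neq\gamma_2$. By symmetry it suffices to treat $\gamma_1$; moreover, being part of a longer shortest arc already forces $\gamma_1$ to be a shortest arc, so the only thing I must rule out is that $\gamma_1$ is part of a longer shortest arc.

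So suppose, for contradiction, that $\gamma_1=\tilde{\gamma}_1|_{[0,a]}$ for some arclength-parametrized shortest arc $\tilde{\gamma}_1\colon[0,b]\to G$ with $b>a$, and put $r:=\tilde{\gamma}_1(b)$, so that $d(p,r)=b$. I would then form the concatenated curve $\sigma\colon[0,b]\to G$ that follows $\gamma_2$ on $[0,a]$ and the tail $\tilde{\gamma}_1|_{[a,b]}$ afterwards. Since both pieces are parametrized by arclength, $\sigma$ is parametrized by arclength and has length $a+(b-a)=b=d(p,r)$; hence $\sigma$ is itself a shortest arc from $p$ to $r$.

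The decisive step is to observe that $\sigma$, being a shortest arc, is a normal geodesic and therefore real-analytic on $(0,b)$. Now $\sigma$ and $\tilde{\gamma}_1$ are both real-analytic on $(0,b)$ and coincide on the subinterval $(a,b)$, where both equal $\tilde{\gamma}_1$; by the identity theorem for real-analytic maps (applied in analytic charts of $G$) they coincide on all of $(0,b)$, and hence, by continuity, on $[0,b]$. Restricting to $[0,a]$ yields $\gamma_2=\sigma|_{[0,a]}=\tilde{\gamma}_1|_{[0,a]}=\gamma_1$, contradicting $\gamma_1\neq\gamma_2$. Interchanging the roles of $\gamma_1$ and $\gamma_2$ then completes the argument.

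I expect the genuine obstacle to be precisely the claim used in the third paragraph, namely that $\sigma$ is a bona fide normal geodesic across the junction point $q$: a priori $\sigma$ is only a broken extremal, and one must exclude both a corner at $q$ and the possibility of an abnormal minimizer that does not satisfy the ODE of Theorem \ref{general}. In the setting of this paper the distribution $D$ is contact (rank two and bracket-generating on a three-dimensional group), so every shortest arc is normal and hence real-analytic; this is exactly what legitimizes the analytic-continuation step and, equivalently, forbids $\sigma$ from exhibiting the corner at $q$ that the inequality $\gamma_1\neq\gamma_2$ would otherwise force.
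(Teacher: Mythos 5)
Your argument is correct and is essentially the paper's own proof: the authors likewise concatenate $\gamma_2$ with the tail of the extended shortest arc, observe the concatenation is a shortest arc and hence a geodesic, and conclude by real-analytic continuation that $\gamma_1=\gamma_2$. Your closing caveat about normality of the concatenation at the junction is well placed — the paper leaves this implicit in Corollary~\ref{cor} and only secures it (via normality of all geodesics in the three-dimensional contact case) in Corollary~\ref{cor1}.
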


\begin{proof}
Every Lie group has a canonical structure of real-analytic manifold, with respect to which group operations are real-analytic. Therefore in consequence of theorem
\ref{general} any parametrized by arclength normal geodesic in the Lie group with
left-invariant sub-Riemannian metric is real-analytic.

Let suppose now that parametrized by arclength geodesics 
$\gamma_i(t),$ $0\leq t\leq t_0,$ $i=1,2$ в $(G,d)$ join the same points and
$\gamma_1(t),$ $0\leq t\leq t_0+\varepsilon,$  where $\varepsilon > 0,$ is
parametrized by arclength shortest arc. Then concatenation of the geodesic
$\gamma_2(t),$ $0\leq t\leq t_0,$ and $\gamma_1(t),$
$t_0\leq t\leq t_0+ \varepsilon,$ is a shortest arc, hence geodesic. 
Consequently original geodesics, being real-analytic prolongations of joint geodesic segment $\gamma_1(t),$ $t_0\leq t\leq t_0+ \varepsilon,$ must coincide.

The corollary \ref{cor} immediately follows from here.
\end{proof}

\begin{corollary}
\label{cor1}
The statement of corollary \ref{cor} is true for any parametrized by arclength geodesics of three-dimensional Lie group with left-invariant sub-Riemannian metric.
\end{corollary}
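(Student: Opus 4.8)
The plan is to reduce Corollary~\ref{cor1} to Corollary~\ref{cor} by proving that, in a three-dimensional Lie group $G$ with left-invariant sub-Riemannian metric $d$, \emph{every} parametrized by arclength geodesic (i.e.\ locally shortest arc) is automatically a normal geodesic in the sense of Theorem~\ref{general}. Once this is established, the word ``normal'' in Corollary~\ref{cor} imposes no restriction on the geodesics under consideration, and the assertion of Corollary~\ref{cor1} becomes a verbatim instance of Corollary~\ref{cor}. Thus the entire content of the corollary lies in excluding the possibility that a shortest arc be a strictly abnormal (singular) extremal.

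First I would determine the type of the distribution. Complete nonholonomicity rules out $\dim D(e)=1$, since a line field is never bracket generating; and if $\dim D(e)=3$ the metric is Riemannian, every geodesic satisfies the classical geodesic equations and is therefore normal. Hence the only genuinely sub-Riemannian case is $\dim D(e)=2$. Writing $D=\ker\alpha$ for a left-invariant $1$-form $\alpha$ and choosing a left-invariant frame $X_1,X_2$ of $D$, complete nonholonomicity forces $[X_1,X_2]\notin D$, for otherwise $D$ would be integrable by the Frobenius theorem. Since $\alpha$ and $X_1,X_2$ are left-invariant, $\alpha(X_i)\equiv 0$ and so $d\alpha(X_1,X_2)=-\alpha([X_1,X_2])\neq 0$; thus $d\alpha|_{D}$ is a nonzero, hence nondegenerate, $2$-form on each plane $D(x)$, i.e.\ $\alpha\wedge d\alpha\neq 0$ and the structure is of \emph{contact} type. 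By left-invariance this holds uniformly over $G$, so no Martinet-type degeneracy can arise and the step is constantly equal to two.

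Next I would invoke the Pontryagin Maximum Principle: every locally shortest arc is either a normal or an abnormal extremal. For an abnormal extremal $\gamma$ there is a nowhere-vanishing covector field $\lambda(t)$ along $\gamma$ annihilating $D$, hence proportional to $\alpha$, and the abnormality conditions confine the tangent vector $\stackrel{\cdot}\gamma(t)$ to the kernel of $d\alpha|_{D(\gamma(t))}$. In the contact case this restricted form is nondegenerate, so that kernel is trivial and $\stackrel{\cdot}\gamma\equiv 0$; the abnormal extremal is constant. Consequently every nonconstant locally shortest arc in $(G,d)$ is a normal geodesic, Theorem~\ref{general} applies to it, and the proof of Corollary~\ref{cor} goes through unchanged, yielding Corollary~\ref{cor1}.

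The hard part, and the only step requiring genuine care, is the exclusion of abnormal minimizers: one must write the abnormal extremal equations correctly and read off that they force $\stackrel{\cdot}\gamma\in\ker(d\alpha|_{D})$. The feature that saves the argument in the present setting is that left-invariance makes the distribution contact \emph{everywhere}, so the nondegeneracy of $d\alpha|_{D}$ is automatic and the classical theorem on the absence of nontrivial abnormal minimizers in contact sub-Riemannian geometry applies without any additional hypotheses.
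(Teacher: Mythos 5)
Your proposal is correct and follows the same route as the paper: reduce to Corollary~\ref{cor} by showing that every parametrized by arclength geodesic of a three-dimensional Lie group with left-invariant sub-Riemannian metric is automatically normal. The only difference is that the paper simply cites this normality statement (Theorem~3 of \cite{Ber}), whereas you supply a self-contained proof via the contact-type argument excluding nontrivial abnormal extremals; that argument is sound, since left-invariance indeed makes the rank-two bracket-generating case contact everywhere.
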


\begin{proof}
It is well known that every parametrized by arclength geodesic in three-dimensional
Lie group with left-invariant sub-Riemannian metric is normal (see theorem 3 in \cite{Ber}).  It is remain to apply corollary \ref{cor}.
\end{proof}

\section{Sub-Riemannian distance in the Lie group $SU(2)$}

Every geodesic of left-invariant sub-Riemannian matric on the Lie group is a left shift of some geodesic with origin at the unit. Therefore later we consider only geodesic with origin at the unit.

Let us recall that $SU(2)$ is compact simply connected Lie group of all unitary unimodular $2\times 2$--matrices
$$SU(2)=\left\{\left(\begin{array}{cc}
A & B \\
-\overline{B} & \overline{A}
\end{array}\right)\,\,\mid\,\,A,\,B\in\mathbb{C},\,\,\mid A\mid^2+\mid B\mid^2=1\right\}.$$
Later $(A,B)$ will denote the matrix $\left(\begin{array}{cc}
A & B \\
-\overline{B} & \overline{A}
\end{array}\right)\in SU(2)$.

The Lie algebra $\frak{su}(2)$ of the Lie group $SU(2)$ is the Lie algebra of all skew-Hermitian $2\times 2$--matrices with zero trace
$$\frak{su}(2)=\left\{\left(\begin{array}{cc}
iX & Y \\
-\overline{Y} & -iX
\end{array}\right)\,\,\mid\,\,X\in\mathbb{R},\,\,Y\in\mathbb{C}\right\}.$$

Let us set a basis of $\frak{su}(2)$ in the following way:
\begin{equation}
\label{pp}
p_1=\frac{1}{2}\left(\begin{array}{cc}
0 & 1 \\
-1 & 0
\end{array}\right),\,\,p_2=\frac{1}{2}\left(\begin{array}{cc}
0 & i \\
i & 0
\end{array}\right),\,\,k=\frac{1}{2}\left(\begin{array}{cc}
i & 0 \\
0 & -i
\end{array}\right).
\end{equation}

Let $\Delta(e)=\mbox{Lin}(p_1,p_2),$ where $e$ is the unit of group, $\Lin$
denotes linear span, and on $\Delta(e)$ is given scalar product 
$\left(\cdot,\cdot\right)$ with orthonormal basis $p_1$, $p_2$.
It follows from (\ref{pp}) that
$$[p_1,p_2]=k,\,\,[p_2,k]=p_1,\,\,[k,p_1]=p_2.$$
Therefore left-invariant distribution $\Delta$ on the Lie group $SU(2)$ with given
$\Delta(e)$ is completely nonholonomic and the pair 
$\left(\Delta(e),\left(\cdot,\cdot\right)\right)$ defines left-invariant sub-Riemannian metric $\rho$ on $SU(2),$ which is invariant relative to right
shifts of the group $SU(2)$ by elements of subgroup $SO(2)\subset SU(2)$. In addition every parametrized by arclength geodesic $\gamma=\gamma(t)$, $t\in\mathbb{R}$, in $SU(2)$ with condition $\gamma(0)=e$ is a product of two 1-parameter subgroups (see eg. \cite{AS}):
\begin{equation}
\label{geod2}
\gamma(t)=\exp{(t(\cos{\phi_0}p_1+\sin{\phi_0}p_2+\beta k))}\exp{(-t\beta k)},
\end{equation}
where $\phi_0$, $\beta$ are some real constants.

In consequence of left invariance of the metric $\rho$ on $SU(2)$ it is sufficient to compute distances $\rho(e,g)$ for $e,g\in SU(2).$

\begin{proposition}
\label{bou}
If $g=\gamma(t_0),$ where $\gamma(t)$ is defined by formula (\ref{geod2}), and
$\gamma(t),$ $0\leq t \leq t_0,$ is shortest arc (joining $e$ and $g$) then $t_0\leq \frac{2\pi}{\sqrt{1+\beta^2}}.$
\end{proposition}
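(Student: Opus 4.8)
The plan is to reach the estimate by a symmetry argument combined with Corollary \ref{cor1}. Set $T:=\frac{2\pi}{\sqrt{1+\beta^2}}$. I would show that the first factor of $\gamma$ hits the nontrivial central element $-e$ exactly at time $T$, exhibit a second geodesic of the form (\ref{geod2}), distinct from $\gamma$ but of the same length $T$ and with the same endpoint $\gamma(T)$, and then conclude. Since $SU(2)$ is three-dimensional, Corollary \ref{cor1} applies to \emph{every} arclength-parametrized geodesic, so no separate normality verification is needed.

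First I would compute the one-parameter subgroup $t\mapsto\exp(t\omega)$ with $\omega=\cos\phi_0\,p_1+\sin\phi_0\,p_2+\beta k$. Writing $\omega$ as a matrix via (\ref{pp}) gives $\omega=\frac12\left(\begin{smallmatrix} i\beta & e^{i\phi_0}\\ -e^{-i\phi_0} & -i\beta\end{smallmatrix}\right)$, whence a direct multiplication yields $\omega^2=-\frac{1+\beta^2}{4}\,e$. Consequently $\exp(t\omega)=\cos\!\big(\tfrac{\sqrt{1+\beta^2}}{2}t\big)e+\frac{2}{\sqrt{1+\beta^2}}\sin\!\big(\tfrac{\sqrt{1+\beta^2}}{2}t\big)\omega$, and at $t=T$ the argument of the trigonometric functions equals $\pi$, so $\exp(T\omega)=-e$. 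Because $-e$ is central (it is a scalar matrix), $\gamma(T)=\exp(T\omega)\exp(-T\beta k)=-\exp(-T\beta k)$, an endpoint that does not depend on $\phi_0$.

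Next I would let $\widetilde\gamma$ be the geodesic (\ref{geod2}) with $\phi_0$ replaced by $\phi_0+\pi$ and the same $\beta$. Its generator $\widetilde\omega$ has the same matrix square $-\frac{1+\beta^2}{4}e$, so the identical computation gives $\exp(T\widetilde\omega)=-e$ and hence $\widetilde\gamma(T)=-\exp(-T\beta k)=\gamma(T)$. The two geodesics are genuinely different, since their horizontal initial velocities $\cos\phi_0\,p_1+\sin\phi_0\,p_2$ and $-\cos\phi_0\,p_1-\sin\phi_0\,p_2$ differ, while both are parametrized by arclength on $[0,T]$ and therefore have equal length $T$. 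Now if $t_0>T$ and $\gamma|_{[0,t_0]}$ were a shortest arc, its subarc $\gamma|_{[0,T]}$ would be a shortest arc that is part of a longer shortest arc, contradicting Corollary \ref{cor1} applied to the pair $\gamma|_{[0,T]}$, $\widetilde\gamma|_{[0,T]}$. Hence $t_0\le T=\frac{2\pi}{\sqrt{1+\beta^2}}$.

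The only delicate point is the exponential computation in the second step, together with the observation that $\exp(T\omega)$ lands on the central element $-e$ independently of the horizontal direction $\phi_0$; this centrality is precisely what forces the two geodesics to meet at time $T$ and is the heart of the argument. Everything else reduces to Corollary \ref{cor1} and the elementary fact that a subarc of a shortest arc is again a shortest arc.
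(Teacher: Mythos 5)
Your proof is correct and follows essentially the same route as the paper's: both arguments show that the endpoint $\gamma\bigl(2\pi/\sqrt{1+\beta^2}\bigr)$ is independent of $\phi_0$, so that two distinct arclength geodesics of equal length join $e$ to it, and then conclude via Corollary \ref{cor1}. The only difference is that the paper reads off this independence from the explicit formulas (\ref{sistema})--(\ref{bb}) taken from \cite{BR}, whereas you derive it directly from $\exp(T\omega)=-e$; the logical skeleton is identical.
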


\begin{proof} Let us set $A=A(\gamma(t)),$ $B=B(\gamma(t)).$ It is computed in the paper~\cite{BR} that
\begin{equation}
\label{sistema}
\left\{\begin{array}{rl}
\R(A)=\frac{\beta}{\sqrt{1+\beta^2}}\sin{\frac{t\sqrt{1+\beta^2}}{2}}\sin{\frac{\beta t}{2}}+\cos{\frac{t\sqrt{1+\beta^2}}{2}}\cos{\frac{\beta t}{2}}, \\
\I(A)=\frac{\beta}{\sqrt{1+\beta^2}}\sin{\frac{t\sqrt{1+\beta^2}}{2}}\cos{\frac{\beta t}{2}}-\cos{\frac{t\sqrt{1+\beta^2}}{2}}\sin{\frac{\beta t}{2}},
\end{array}\right.
\end{equation}
\begin{equation}
\label{bb}
B=\frac{\sin{\frac{t\sqrt{1+\beta^2}}{2}}}{\sqrt{1+\beta^2}}\left(\cos{\left(\frac{\beta t}{2}+\phi_0\right)}+i\sin{\left(\frac{\beta t}{2}+\phi_0\right)}\right).
\end{equation}

It follows from formulas (\ref{sistema}), (\ref{bb}) that $A$ does not depend on
$\phi_0,$ and $B(\gamma(2\pi/\sqrt{1+\beta^2}))=0.$ Then
$\gamma(2\pi/\sqrt{1+\beta^2})$ does not depend on $\phi_0.$ It is remain to apply corollary \ref{cor1}.
\end{proof}

The main result of this section constitutes

\begin{theorem}
\label{mainn}
Let $g=(A,B)\in SU(2)$, $e=(1,0)$ be the unit of the group $SU(2)$, $t=\rho(g,e)$. Then

1. If $A=0$ then $t=\pi$.

2. If $\mid A\mid=1$ then $t=2\sqrt{\mid\Arg(A)\mid\left(2\pi-\mid\Arg(A)\mid\right)},$ where $\Arg(A)\in [-\pi,\pi]$.

3. If $0<\mid A\mid<1$ and $\R(A)=\mid A\mid\sin{\left(\frac{\pi}{2}\mid A\mid\right)}$, then $t=\pi\sqrt{1-\mid A\mid^2}$.

4. If $0<\mid A\mid<1$ and $\R(A)>\mid A\mid\sin{\left(\frac{\pi}{2}\mid A\mid\right)}$, then
\begin{equation}
\label{tet1}
t=\frac{2}{\sqrt{1+\beta^2}}\Arcsin\sqrt{(1-\mid A\mid^2)(1+\beta^2)}\in\left(0,\frac{\pi}{\sqrt{1+\beta^2}}\right),
\end{equation}
where $\beta$ is unique solution for system of equations
\begin{equation}
\label{a1}
\left\{\begin{array}{c}
\cos{\left(-\frac{\beta}{\sqrt{1+\beta^2}}\Arcsin\sqrt{(1-\mid A\mid^2)(1+\beta^2)}+\Arcsin\frac{\beta\sqrt{1-\mid A\mid^2}}{\mid A\mid}\right)}=\frac{\R(A)}{\mid A\mid}, \\
\sin{\left(-\frac{\beta}{\sqrt{1+\beta^2}}\Arcsin\sqrt{(1-\mid A\mid^2)(1+\beta^2)}+\Arcsin\frac{\beta\sqrt{1-\mid A\mid^2}}{\mid A\mid}\right)}=\frac{\I(A)}{\mid A\mid}.
\end{array}\right.
\end{equation}

5. If $0<\mid A\mid<1$ и $\R(A)<\mid A\mid\sin{\left(\frac{\pi}{2}\mid A\mid\right)}$, then
\begin{equation}
\label{tet2}
t=\frac{2}{\sqrt{1+\beta^2}}\left(\pi-\Arcsin\sqrt{(1-\mid A\mid^2)(1+\beta^2)}\right)\in\left(\frac{\pi}{\sqrt{1+\beta^2}},\frac{2\pi}{\sqrt{1+\beta^2}}\right),
\end{equation}
where $\beta$ is unique solution for system of equations
\begin{equation}
\label{a2}
\left\{\begin{array}{c}
\cos{\left(\frac{\beta}{\sqrt{1+\beta^2}}\left(\pi-\Arcsin\sqrt{(1-\mid A\mid^2)(1+\beta^2)}\right)+\Arcsin\frac{\beta\sqrt{1-\mid A\mid^2}}{\mid A\mid}\right)}=-\frac{\R(A)}{\mid A\mid}, \\
\sin{\left(\frac{\beta}{\sqrt{1+\beta^2}}\left(\pi-\Arcsin\sqrt{(1-\mid A\mid^2)(1+\beta^2)}\right)+\Arcsin\frac{\beta\sqrt{1-\mid A\mid^2}}{\mid A\mid}\right)}=\frac{\I(A)}{\mid A\mid}.
\end{array}\right.
\end{equation}
\end{theorem}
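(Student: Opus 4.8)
The plan is to invert the explicit endpoint formulas (\ref{sistema})--(\ref{bb}) and, among all geodesics (\ref{geod2}) reaching $g$, to single out the shortest one, using Proposition \ref{bou} and Corollary \ref{cor1} to control which candidates can be shortest arcs. Writing $\omega=\sqrt{1+\beta^2}$, a direct regrouping of (\ref{sistema}) puts $A$ in the compact form
\begin{equation*}
A=e^{-i\beta t/2}\left(\cos\tfrac{t\omega}{2}+i\tfrac{\beta}{\omega}\sin\tfrac{t\omega}{2}\right).
\end{equation*}
Hence $|A|^2=1-\frac{\sin^2(t\omega/2)}{1+\beta^2}$ and, from (\ref{bb}), $|B|=|\sin(t\omega/2)|/\omega$ with $\Arg(B)=\frac{\beta t}{2}+\phi_0$. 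Two observations drive everything: $A$ is independent of $\phi_0$, and $|A|$ depends only on $\beta$ and the product $t\omega$. Since $SU(2)$ is compact a minimizer exists; being a normal geodesic it has the form (\ref{geod2}), and by Proposition \ref{bou} its parameters satisfy $\frac{t\omega}{2}\le\pi$, so throughout $\sin\frac{t\omega}{2}\ge 0$ and $\frac{t\omega}{2}\in(0,\pi]$.

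Next I would dispose of the degenerate values of $|A|$. If $|A|=0$, then the second factor $C:=\cos\frac{t\omega}{2}+i\frac{\beta}{\omega}\sin\frac{t\omega}{2}$ vanishes, which forces $\cos\frac{t\omega}{2}=0$ and $\beta=0$; then $\omega=1$ and the least positive $t$ is $\pi$, giving case~1. If $|A|=1$, then $\sin\frac{t\omega}{2}=0$ with $\frac{t\omega}{2}\in(0,\pi]$ forces $\frac{t\omega}{2}=\pi$, i.e. $t=2\pi/\omega$, so $A=-e^{-i\pi\beta/\omega}$ and $\Arg(A)=\pi\bigl(1-\frac{\beta}{\omega}\bigr)$; solving for $\frac{\beta}{\omega}$ and using $\frac1\omega=\sqrt{1-(\beta/\omega)^2}$ converts $t=2\pi/\omega$ into $t=2\sqrt{|\Arg(A)|(2\pi-|\Arg(A)|)}$, which is case~2. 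Here $\beta$ (hence $t$) is fixed by $\Arg(A)$, while $\phi_0$ stays free because $B=0$.

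The range $0<|A|<1$ is the heart. There $\frac{t\omega}{2}\in(0,\pi)$ and the magnitude equation gives $\sin\frac{t\omega}{2}=\sqrt{(1-|A|^2)(1+\beta^2)}$, with the two admissible branches $\frac{t\omega}{2}=\Arcsin\sqrt{(1-|A|^2)(1+\beta^2)}\in(0,\frac\pi2]$ and $\frac{t\omega}{2}=\pi-\Arcsin\sqrt{(1-|A|^2)(1+\beta^2)}\in[\frac\pi2,\pi)$, producing the candidate times (\ref{tet1}) and (\ref{tet2}). In each branch I would compute $\Arg(A)=-\frac{\beta t}{2}+\Arg(C)$ from $\sin(\Arg C)=\frac{\beta\sqrt{1-|A|^2}}{|A|}$ and the sign of $\R(C)=\cos\frac{t\omega}{2}$; matching $\cos\Arg(A)=\R(A)/|A|$ and $\sin\Arg(A)=\I(A)/|A|$ then yields exactly system (\ref{a1}) when $\cos\frac{t\omega}{2}>0$ and, after passing to $-A$ to obtain a first-quadrant reference angle, system (\ref{a2}) when $\cos\frac{t\omega}{2}<0$. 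The borderline $\frac{t\omega}{2}=\frac\pi2$ gives $C=i\beta/\omega$, hence $|\beta|/\omega=|A|$, $t=\pi\sqrt{1-|A|^2}$ and $\R(A)=|A|\sin(\frac\pi2|A|)$, $\I(A)=\sgn(\beta)\,|A|\cos(\frac\pi2|A|)$, i.e. precisely case~3; consequently the sign of $\R(A)-|A|\sin(\frac\pi2|A|)$ separates $\cos\frac{t\omega}{2}>0$ (case~4) from $\cos\frac{t\omega}{2}<0$ (case~5).

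The main obstacle is uniqueness of $\beta$ together with the certification that the chosen candidate is genuinely a shortest arc. For uniqueness I would show that, at fixed $|A|$, the phase $\beta\mapsto\Arg(A)$ is strictly monotone on the admissible interval $|\beta|\le |A|/\sqrt{1-|A|^2}$ in each branch, most directly by differentiating the arguments of (\ref{a1}), (\ref{a2}); this bookkeeping of $\Arcsin$ branches and the resulting monotonicity is the delicate, computational part. Monotonicity shows that as $\beta$ runs through its interval the case-4 family sweeps the targets with $|\Arg A|<\frac\pi2(1-|A|)$ and the case-5 family the complementary arc, so each $g$ is met by exactly one admissible pair $(\beta,t)$ and the corresponding system has a unique root. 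Finally, existence of a minimizer together with Proposition \ref{bou} places the shortest arc inside $\frac{t\omega}{2}\le\pi$, where this admissible preimage is unique; hence the computed $t$ equals $\rho(g,e)$. Corollary \ref{cor1} supplies the complementary fact that two distinct equal-length geodesics to $g$ can occur only at the cut value $\frac{t\omega}{2}=\pi$, which is exactly where case~2 and the free parameter $\phi_0$ reside.
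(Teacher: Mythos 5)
Your proposal is correct and follows essentially the same route as the paper's proof: the same modulus identity, the same trichotomy governed by the sign of $\R(A)-|A|\sin\left(\frac{\pi}{2}|A|\right)$ (equivalently of $\cos\frac{t\sqrt{1+\beta^2}}{2}$), and the same uniqueness mechanism, namely strict monotonicity of the phase $\beta\mapsto\Arg(A)$ on $|\beta|\le |A|/\sqrt{1-|A|^2}$ in each branch, which the paper carries out in its four lemmas via exactly the derivative computation you defer (it reduces to $z-\sqrt{1-z^2}\Arcsin z>0$). The only slight misattribution is in your final sentence: it is Proposition \ref{bou} (itself resting on Corollary \ref{cor1}) that confines the minimizer to $t\le 2\pi/\sqrt{1+\beta^2}$; Corollary \ref{cor1} does not by itself locate where equal-length geodesic pairs occur.
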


\begin{proof}

If $g=(A,B)\in SU(2)$ and $t=\rho(g,e)$, then there exists a geodesic $\gamma$ 
(see (\ref{geod2})) such that $\gamma(0)=e$, $\gamma(t)=g$.

In consequence of (\ref{sistema}),
\begin{equation}
\label{sss}
\mid A\mid^2=\R^2(A)+\I^2(A)=\frac{\beta^2+\cos^2{\frac{t\sqrt{1+\beta^2}}{2}}}{1+\beta^2}.
\end{equation}
It follows from here and inequality $0\leq t\leq\frac{2\pi}{\sqrt{1+\beta^2}}$ of proposition \ref{bou} that
\begin{equation}
\label{mv}
\sin{\frac{t\sqrt{1+\beta^2}}{2}}=\sqrt{(1-\mid A\mid^2)(1+\beta^2)}.
\end{equation}

Let $\mid A\mid=0$. Then (\ref{mv}) implies that $\beta=0$, $t=\pi$.

Let $\mid A\mid=1$. Then $\sin{\frac{t\sqrt{1+\beta^2}}{2}}=0$ and $B=0$ on the ground of (\ref{mv}), (\ref{bb}). If $A=1$ then $t=0$. If $A\neq 1$ then $t=\frac{2\pi}{\sqrt{1+\beta^2}}$, whence $\mid\beta\mid=\frac{\sqrt{4\pi^2-t^2}}{t}$. Therefore equations of the system (\ref{sistema}) take the form
\begin{equation}
\label{mmm}
\R(A)=-\cos{\frac{\pi\beta}{\sqrt{1+\beta^2}}},\quad
\I(A)=\sin{\frac{\pi\beta}{\sqrt{1+\beta^2}}}.
\end{equation}
Substituting $\mid\beta\mid=\frac{\sqrt{4\pi^2-t^2}}{t}$ into (\ref{mmm}), we get
$$\R(A)=-\cos{\frac{\sqrt{4\pi^2-t^2}}{2}},\quad \I(A)=\pm\sin{\frac{\sqrt{4\pi^2-t^2}}{2}}.$$
Consequently $\sqrt{4\pi^2-t^2}=2\left(\pi-\mid\mbox{arg}(A)\mid\right)$, where 
$\mbox{arg}(A)\in[-\pi,\pi]$.
Expressing $t$ from the last equality, we get the statement p. 2 of theorem \ref{mainn}.

Let $0<\mid A\mid<1$. Then it follows from (\ref{sistema}) and proposition
\ref{bou} that  $0<t<\frac{2\pi}{\sqrt{1+\beta^2}}$. In consequence of (\ref{sistema}), (\ref{sss}), there exists $\gamma\in[-\pi,\pi]$, satisfying conditions
\begin{equation}
\label{gama}
\cos{\gamma}=\frac{\sqrt{1+\beta^2}\cos{\frac{t\sqrt{1+\beta^2}}{2}}}{\sqrt{\beta^2+\cos^2{\frac{t\sqrt{1+\beta^2}}{2}}}},\quad
\sin{\gamma}=\frac{\beta\sin{\frac{t\sqrt{1+\beta^2}}{2}}}{\sqrt{\beta^2+\cos^2{\frac{t\sqrt{1+\beta^2}}{2}}}}.
\end{equation}
In addition, one can write the system (\ref{sistema}) in the form
\begin{equation}
\label{sistema0}
\cos{\left(\gamma-\frac{\beta t}{2}\right)}=\frac{\R(A)}{\mid A\mid},\quad
\sin{\left(\gamma-\frac{\beta t}{2}\right)}=\frac{\I(A)}{\mid A\mid}.
\end{equation}

Let us consider the case $0<t<\frac{\pi}{\sqrt{1+\beta^2}}$. It follows from
(\ref{mv}) that one can find $t$ by formula (\ref{tet1}).
On the ground of (\ref{gama}), $\cos{\gamma}>0$. Finding $\gamma$ rom
(\ref{gama}) and using (\ref{tet1}), (\ref{sss}), we get
$$\gamma=\mbox{arcsin}\left(\frac{\beta\sin{\frac{t\sqrt{1+\beta^2}}{2}}}{\sqrt{\beta^2+\cos^2{\frac{t\sqrt{1+\beta^2}}{2}}}}\right)=
\mbox{arcsin}\frac{\beta\sqrt{1-\mid A\mid^2}}{\mid A\mid}.$$
Therefore one can write the system (\ref{sistema0}) in the form (\ref{a1}).

\begin{lemma}
\label{t1}
Let us suppose that $0<\mid A\mid<1$ and define function $t_1=t_1(\beta)$ on segment
 $\left[-\frac{\mid A\mid}{\sqrt{1-\mid A\mid^2}},\frac{\mid A\mid}{\sqrt{1-\mid A\mid^2}}\right]$  by formula (\ref{tet1}). Then

1. $t_1(\beta)$ is even function;

2. $t_1(\beta)$ increases on segment 
$\left[0,\frac{\mid A\mid}{\sqrt{1-\mid A\mid^2}}\right]$
and decreases on segment $\left[-\frac{\mid A\mid}{\sqrt{1-\mid A\mid^2}},0\right]$;

3. Its range is segment 
$\left [2\Arcsin\sqrt{1-\mid A\mid^2},\pi\sqrt{1-\mid A\mid^2}\right ]$.
\end{lemma}

\begin{proof}
The evenness of the function $t_1=t_1(\beta)$ is evident.

Let us set
\begin{equation}
\label{z}
z=\sqrt{(1-\mid A\mid^2)(1+\beta^2)},\,\,z\in\left [\sqrt{1-\mid A\mid^2},1\right ].
\end{equation}
It is not difficult to show that
\begin{equation}
\label{tt}
t_1^{\prime}(\beta)=\frac{2\beta}{(\sqrt{1+\beta^2})^3}\cdot\frac{z-\sqrt{1-z^2}\Arcsin z}{\sqrt{1-z^2}}.
\end{equation}
Let us consider function
\begin{equation}
\label{fff}
f(z)=z-\sqrt{1-z^2}\Arcsin z,\,\,z\in\left [\sqrt{1-\mid A\mid^2},1\right ].
\end{equation}
Since
$$f(0)=0,\,\,f^{\prime}(z)=\frac{z\Arcsin z}{\sqrt{1-z^2}}>0\,\,\mbox{if}\,\,z\in (0,1),$$
then $f(z)>0$ for every $z\in\left [\sqrt{1-\mid A\mid^2},1\right ].$
The statement p.\,2 of lemma \ref{t1} follows from here and (\ref{tt}). 
On the ground of (\ref{tet1}),
$$t_1(0)=2\Arcsin\sqrt{1-\mid A\mid^2},\quad t_1\left(\frac{\mid A\mid}{\sqrt{1-\mid A\mid^2}}\right)=\pi\sqrt{1-\mid A\mid^2}.$$
These equalities and p.~2 of lemma \ref{t1} imply the statement p.\,3 of lemma \ref{t1}.
\end{proof}

\begin{lemma}
\label{f1}
Let us suppose $0<\mid A\mid<1$ and define function $F_1(\beta)$ on the segment 
$\left [-\frac{\mid A\mid}{\sqrt{1-\mid A\mid^2}},\frac{\mid A\mid}{\sqrt{1-\mid A\mid^2}}\right ]$ by formula
\begin{equation}
\label{arg1}
F_{1}(\beta)=-\frac{\beta}{\sqrt{1+\beta^2}}\Arcsin\sqrt{(1-\mid A\mid^2)(1+\beta^2)}+\Arcsin\frac{\beta\sqrt{1-\mid A\mid^2}}{\mid A\mid}.
\end{equation}
Then

1. $F_1(\beta)$ is odd function;

2. $F_1(\beta)$ increases on all its domain;

3. Its range is segment
$\left [\frac{\pi}{2}(\mid A\mid-1),\frac{\pi}{2}(1-\mid A\mid)\right ]$.
\end{lemma}

\begin{proof}
The oddness of the function $F_{1}(\beta)$ is evident.

It is not difficult to show that
$$F_{1}^{\prime}(\beta)=\frac{f(z)}{(\sqrt{1+\beta^2})^3\cdot\sqrt{1-z^2}},$$
where $z$ is defined by formula (\ref{z}), $f(z)$ is defined by formula (\ref{fff}). Now p.~2 of lemma \ref{f1} follows from the fact that $f(z)>0$ for
every
$z\in\left [\sqrt{1-\mid A\mid^2},1\right ].$

On the ground of (\ref{arg1}),
$$F_{1}\left(\frac{\mid A\mid}{\sqrt{1-\mid A\mid^2}}\right)=\frac{\pi}{2}(1-\mid A\mid).$$
The statement p.\,3 of lemma \ref{f1} follows from this equality and p.~2 of lemma \ref{f1}. 
\end{proof}

The function $F_1(\beta)$, defined by formula (\ref{arg1}), is the argument
of the cosine and the sinus functions in the left parts of equations (\ref{a1}).
Then on the ground of lemma~\ref{f1} the system (\ref{a1}) has a solution,
and this solution is unique, if and only if 
\begin{equation}
\label{ner1}
\frac{\R(A)}{\mid A\mid}\geq\cos{\left(\frac{\pi}{2}(1-\mid A\mid)\right)}\quad\Leftrightarrow\quad\R(A)\geq \mid A\mid\sin{\left(\frac{\pi}{2}\mid A\mid\right)}.
\end{equation}

Notice that if in the formula (\ref{ner1}) stands the equlity sign then 
$\mid\beta\mid=\frac{\mid A\mid}{\sqrt{1-\mid A\mid^2}}$.
But then $(1-\mid A\mid^2)(1+\beta^2)=1,$ and on the ground of 
(\ref{mv}), $\sin{\frac{t\sqrt{1+\beta^2}}{2}}=1$, which is impossible if 
$0<t<\frac{\pi}{\sqrt{1+\beta^2}}$.

Let suppose now that 
$\frac{\pi}{\sqrt{1+\beta^2}}<t<\frac{2\pi}{\sqrt{1+\beta^2}}$. It follows from
(\ref{mv}) that one can find $t$ by the formula (\ref{tet2}).
Then $\cos{\gamma}<0$ on the ground of (\ref{gama}). Finding $\gamma$ from (\ref{gama}) and using (\ref{tet2}), (\ref{sss}), we get
$$\gamma=\pi-\Arcsin\left(\frac{\beta\sin{\frac{t\sqrt{1+\beta^2}}{2}}}{\sqrt{\beta^2+\cos^2{\frac{t\sqrt{1+\beta^2}}{2}}}}\right)=
\pi-\Arcsin\frac{\beta\sqrt{1-\mid A\mid^2}}{\mid A\mid}.$$
Therefore one can write the system of equations (\ref{sistema0}) in the form (\ref{a2}).

\begin{lemma}
\label{t2}
Let us supppose that $0<\mid A\mid<1$ and define function $t_2=t_2(\beta)$ on
the segment 
$\left[-\frac{\mid A\mid}{\sqrt{1-\mid A\mid^2}},\frac{\mid A\mid}{\sqrt{1-\mid A\mid^2}}\right]$  by formula (\ref{tet2}). Then

1. $t_2(\beta)$ is even function;

2. $t_2(\beta)$ decreases on the segment 
$\left[0,\frac{\mid A\mid}{\sqrt{1-\mid A\mid^2}}\right]$
and increases on the segment 
$\left[-\frac{\mid A\mid}{\sqrt{1-\mid A\mid^2}},0\right]$;

3. Its range is segment 
$\left [\pi\sqrt{1-\mid A\mid^2},2(\pi-\Arcsin\sqrt{1-\mid A\mid^2})\right ]$.
\end{lemma}

\begin{proof}
In consequence of (\ref{tet1}) and (\ref{tet2}),
$$t_2(\beta)=\frac{2\pi}{\sqrt{1+\beta^2}}-t_1(\beta).$$
Now lemma \ref{t2} follows from lemma \ref{t1} and the fact that
$$t_2(0)=2(\pi-\Arcsin\sqrt{1-\mid A\mid^2}),\quad t_2\left(\frac{\mid A\mid}{\sqrt{1-\mid A\mid^2}}\right)=\pi\sqrt{1-\mid A\mid^2}.$$
\end{proof}

\begin{lemma}
\label{f2}
Let us suppose that $0<\mid A\mid<1$ and define function $F_2(\beta)$ on the segment 
$\left [-\frac{\mid A\mid}{\sqrt{1-\mid A\mid^2}},\frac{\mid A\mid}{\sqrt{1-\mid A\mid^2}}\right ]$ by formula
$$F_{2}(\beta)=\frac{\beta}{\sqrt{1+\beta^2}}\left(\pi-\Arcsin\sqrt{(1-\mid A\mid^2)(1+\beta^2)}\right)+\Arcsin\frac{\beta\sqrt{1-\mid A\mid^2}}{\mid A\mid}.$$
Then

1. $F_2(\beta)$ is odd funtion;

2. $F_2(\beta)$ increases on all its domain;

3. Its range is segement 
$\left [-\frac{\pi}{2}(1+\mid A\mid),\frac{\pi}{2}(1+\mid A\mid)\right ]$.
\end{lemma}

\begin{proof}
The oddness of function $F_{2}(\beta)$ is evident.

It follows from (\ref{arg1}) that
$$F_{2}(\beta)=\frac{\pi\beta}{\sqrt{1+\beta^2}}+F_{1}(\beta).$$
Now lemma \ref{f2} follows from lemma \ref{f1} and the fact that
$$F_{2}\left(-\frac{\mid A\mid}{\sqrt{1-\mid A\mid^2}}\right)=-\frac{\pi}{2}(1+\mid A\mid),\,\,
F_{2}\left(\frac{\mid A\mid}{\sqrt{1-\mid A\mid^2}}\right)=\frac{\pi}{2}(1+\mid A\mid).$$
\end{proof}

The function $F_2(\beta)$, defined in lemma~\ref{f2}, is argument of the cosine and the sinus functions in left parts of equations (\ref{a2}).
Therefore, on the ground of lemma~\ref{f2}, the system (\ref{a2}) has a solution,
and this solution is unique, if and only if
\begin{equation}
\label{ner2}
-\frac{\R(A)}{\mid A\mid}\geq\cos{\left(\frac{\pi}{2}(1+\mid A\mid)\right)}\quad\Leftrightarrow\quad\R(A)\leq \mid A\mid\sin{\left(\frac{\pi}{2}\mid A\mid\right)}.
\end{equation}

Let us note that if in the formula (\ref{ner2}) stands the equality sign, then
$\mid\beta\mid=\frac{\mid A\mid}{\sqrt{1-\mid A\mid^2}}$.
But then $(1-\mid A\mid^2)(1+\beta^2)=1$, and 
$\sin{\frac{t\sqrt{1+\beta^2}}{2}}=1$ on the ground of (\ref{mv}), 
which is impossible if 
$\frac{\pi}{\sqrt{1+\beta^2}}<t<\frac{2\pi}{\sqrt{1+\beta^2}}$.

It is remain to consider the case $t=\frac{\pi}{\sqrt{1+\beta^2}}$.  Then 
$\mid \beta\mid=\frac{\mid A\mid}{\sqrt{1-\mid A\mid^2}}$ in consequence of (\ref{mv}). Substituting the last formula into expression for $t$, we get that 
$t=\pi\sqrt{1-\mid A\mid^2}$. In view of (\ref{gama}), $\gamma=\frac{\pi}{2}$ if 
$\beta>0,$ and $\gamma=-\frac{\pi}{2}$ if $\beta<0$. Consequently one can write the equations (\ref{sistema0}) in the form
$$\sin{\frac{\beta t}{2}}=\sgn(\beta)\frac{\R(A)}{\mid A\mid},\quad \cos{\frac{\beta t}{2}}=\sgn(\beta)\frac{\I(A)}{\mid A\mid}.$$
Substituting expressions for $\mid\beta\mid$ and $t$ into the last equations, we get that they are equivalent to the equality 
$\R(A)=\mid A\mid\sin{\left(\frac{\pi}{2}\mid A\mid\right)}$ from p. 3 of
theorem \ref{mainn}. This finishes the proof of theorem \ref{mainn}.
\end{proof}

\section{Sub-Riemannian distance in the Lie group $SO(3)$}

Let us recall that $SO(3)$ is compact Lie group, consisting of all orthogonal   $3\times 3$--matrices with determinant $1$.
Its Lie algebra $\frak{so}(3)$ consists of all real skew-symmetric  $3\times 3$--matrices. Let us define a basis of $\frak{so}(3)$ as follows:
\begin{equation}
\label{abc}
a=\left(\begin{array}{ccc}
0 & -1 & 0 \\
1 & 0 & 0 \\
0 & 0 & 0
\end{array}\right),\quad
b=\left(\begin{array}{ccc}
0 & 0 & -1 \\
0 & 0 & 0 \\
1 & 0 & 0
\end{array}\right),\quad
c=\left(\begin{array}{ccc}
0 & 0 & 0 \\
0 & 0 & -1 \\
0 & 1 & 0
\end{array}\right).
\end{equation}
Let us set $D(e)=\Lin(a,b)$ and define scalar product $\langle\cdot,\cdot\rangle$
on $D(e)$ with orthonormal basis $a,b.$ It follows from (\ref{abc}) that
$$[a,b]=c,\quad [b,c]=a,\quad [c,a]= b.$$
Therefore left-invariant distribution $D$ on the Lie group $SO(3)$ with given
$D(e)$ is completely nonholonomic and pair $(D(e),\langle\cdot,\cdot\rangle)$ defines a left-invariant sub-Riemannian metric $d$ on $SO(3).$

The main result of this section is given by the following theorem.

\begin{theorem}
\label{mainnn}
Let $C=(c_{ij})\in SO(3)$, $E$ be the unit of the group $SO(3)$, $t=d(C,E)$. Then

1. If $c_{11}=-1$ then $t=\pi$.

2. If $c_{11}=1$ and $C\neq E$ then $t=\frac{2\pi}{\sqrt{1+\beta^2}}$, where 
$\beta$ is unique solution of the system of equations
\begin{equation}
\label{k0}
\left\{\begin{array}{l}
\cos\frac{\pi\beta}{\sqrt{1+\beta^2}}=-\frac{\sqrt{1+c_{11}+c_{22}+c_{33}}}{2}, \\
\sin\frac{\pi\beta}{\sqrt{1+\beta^2}}=\sgn(c_{32}-c_{23})\frac{\sqrt{1+c_{11}-c_{22}-c_{33}}}{2}.
\end{array}\right.
\end{equation}

3. If
$-1<c_{11}<1$ and $\cos{\left(\pi\sqrt{\frac{1+c_{11}}{2}}\right)}=-\frac{c_{22}+c_{33}}{1+c_{11}}$
then $t=\pi\sqrt{\frac{1}{2}(1-c_{11})}.$

4. If $-1<c_{11}<1$ and $\cos{\left(\pi\sqrt{\frac{1+c_{11}}{2}}\right)}>-\frac{c_{22}+c_{33}}{1+c_{11}}$ then
$$t=\frac{2}{\sqrt{1+\beta^2}}\Arcsin\sqrt{\frac{1}{2}(1-c_{11})(1+\beta^2)},$$
where $\beta$ is unique solution of the system of equations 
\begin{equation}
\label{k1}
\tiny{\left\{\begin{array}{l}
\cos{\left(-\frac{\beta}{\sqrt{1+\beta^2}}\Arcsin\sqrt{\frac{1}{2}(1-c_{11})(1+\beta^2)}+\Arcsin\left(\beta\sqrt{\frac{1-c_{11}}{1+c_{11}}}\right)\right)}=
\sqrt{\frac{1+c_{11}+c_{22}+c_{33}}{2(1+c_{11})}}, \\
\sin{\left(-\frac{\beta}{\sqrt{1+\beta^2}}\Arcsin\sqrt{\frac{1}{2}(1-c_{11})(1+\beta^2)}+\Arcsin\left(\beta\sqrt{\frac{1-c_{11}}{1+c_{11}}}\right)\right)}=\sgn(c_{32}-c_{23})\sqrt{\frac{1+c_{11}-c_{22}-c_{33}}{2(1+c_{11})}}.
\end{array}\right.}
\end{equation}

5. If $-1<c_{11}<1$ and $\cos{\left(\pi\sqrt{\frac{1+c_{11}}{2}}\right)}<-\frac{c_{22}+c_{33}}{1+c_{11}}$ then
$$t=\frac{2}{\sqrt{1+\beta^2}}\left(\pi-\Arcsin\sqrt{\frac{1}{2}(1-c_{11})(1+\beta^2)}\right),$$
where $\beta$ is unique solution of the system of equations 
\begin{equation}
\label{aa2}
\tiny{\left\{\begin{array}{l}
\cos{\left(\frac{\beta}{\sqrt{1+\beta^2}}\left(\pi-\Arcsin\sqrt{\frac{1}{2}(1-c_{11})(1+\beta^2)}\right)+\Arcsin\left(\beta\sqrt{\frac{1-c_{11}}{1+c_{11}}}\right)\right)}=-\sqrt{\frac{1+c_{11}+c_{22}+c_{33}}{2(1+c_{11})}}, \\
\sin{\left(\frac{\beta}{\sqrt{1+\beta^2}}\left(\pi-\Arcsin\sqrt{\frac{1}{2}(1-c_{11})(1+\beta^2)}\right)+\Arcsin\left(\beta\sqrt{\frac{1-c_{11}}{1+c_{11}}}\right)\right)}=\sgn(c_{32}-c_{23})\sqrt{\frac{1+c_{11}-c_{22}-c_{33}}{2(1+c_{11})}}.
\end{array}\right.}
\end{equation}
\end{theorem}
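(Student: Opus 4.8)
The plan is to transfer Theorem \ref{mainn} from $(SU(2),\rho)$ to $(SO(3),d)$ through the canonical two-sheeted covering epimorphism $\Omega\colon SU(2)\to SO(3)$, whose kernel is the center $\{E,-E\}$ of $SU(2)$. Because $\Omega$ is simultaneously a submetry and a local isometry for $\rho$ and $d$, and because $-E$ is central, every $C\in SO(3)$ with preimage $\Omega^{-1}(C)=\{g,-g\}$ satisfies
\begin{equation*}
d(C,E)=\min\{\rho(g,e),\rho(-g,e)\},\qquad -g=(-A,-B)\ \text{if}\ g=(A,B).
\end{equation*}
So the task splits into (i) rewriting the quantities $|A|$, $\R(A)$, $\I(A)$ that control the cases of Theorem \ref{mainn} in terms of the $c_{ij}$, and (ii) deciding which of $g$, $-g$ realizes the minimum.

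First I would invoke F.~Klein's formula from \cite{Kl} for $\Omega$ to express the entries of $C=\Omega(g)$ through $A,B$. Reading off the diagonal (the trace equals $4\R^2(A)-1$) and the antisymmetric part gives
\begin{equation*}
|A|^2=\frac{1+c_{11}}{2},\quad \R^2(A)=\frac{1+c_{11}+c_{22}+c_{33}}{4},\quad \I^2(A)=\frac{1+c_{11}-c_{22}-c_{33}}{4},
\end{equation*}
together with $\sgn(\I(A))=\sgn(c_{32}-c_{23})$. Hence $1-|A|^2=\tfrac12(1-c_{11})$, so the alternatives $c_{11}=-1,\,c_{11}=1,\,-1<c_{11}<1$ correspond precisely to $|A|=0,\,|A|=1,\,0<|A|<1$, while $\tfrac{|\R(A)|}{|A|}=\sqrt{\tfrac{1+c_{11}+c_{22}+c_{33}}{2(1+c_{11})}}$ and $\tfrac{|\I(A)|}{|A|}=\sqrt{\tfrac{1+c_{11}-c_{22}-c_{33}}{2(1+c_{11})}}$. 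A short trigonometric computation, using $\sqrt{\tfrac{1+c_{11}}{2}}=|A|$ and $c_{22}+c_{33}=4\R^2(A)-2|A|^2$, turns the dividing relation $\R(A)=|A|\sin(\tfrac{\pi}{2}|A|)$ of Theorem \ref{mainn} into $\cos(\pi\sqrt{\tfrac{1+c_{11}}{2}})=-\tfrac{c_{22}+c_{33}}{1+c_{11}}$, and the two corresponding inequalities into those of Theorem \ref{mainnn}; this reproduces its case splitting.

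The decisive step (ii) is a comparison of the two lift lengths. Replacing $g$ by $-g$ sends $(\R(A),\I(A))\mapsto(-\R(A),-\I(A))$ and fixes $|A|$; since $c_{22}+c_{33}=4\R^2(A)-2|A|^2$ is even in $\R(A)$, both lifts lie in the same case of Theorem \ref{mainnn}. When $|\R(A)|>|A|\sin(\tfrac{\pi}{2}|A|)$ the lift with $\R(A)>0$ falls under item 4 of Theorem \ref{mainn} and the other under item 5; by the ranges in Lemmas \ref{t1} and \ref{t2} the item-4 length $t_1$ never exceeds $\pi\sqrt{1-|A|^2}$ and the item-5 length $t_2$ never drops below it, so the minimum is the lift with $\R(A)\ge0$. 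The genuinely delicate regime is $|\R(A)|<|A|\sin(\tfrac{\pi}{2}|A|)$, where both lifts sit in item 5: here I would use that in $(\ref{a2})$ the argument $F_2(\beta)$ is odd and strictly increasing (Lemma \ref{f2}), while $t_2(\beta)$ is even and strictly decreasing in $|\beta|$ (Lemma \ref{t2}). Since $\cos F_2(\beta)=-\R(A)/|A|$, the lift with $\R(A)\ge0$ forces $|F_2(\beta)|\ge\tfrac{\pi}{2}$, hence the larger $|\beta|$, hence the smaller $t_2$; so once more the minimizer is the lift with $\R(A)\ge0$. This monotonicity comparison, together with pinning down the sign in Klein's formula, is the main obstacle.

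Finally I would assemble the answer. For the minimizing lift $\R(A)=|\R(A)|\ge0$, so $\tfrac{\R(A)}{|A|}=\sqrt{\tfrac{1+c_{11}+c_{22}+c_{33}}{2(1+c_{11})}}$, $\tfrac{\I(A)}{|A|}=\sgn(c_{32}-c_{23})\sqrt{\tfrac{1+c_{11}-c_{22}-c_{33}}{2(1+c_{11})}}$, and $\beta\sqrt{\tfrac{1-c_{11}}{1+c_{11}}}=\tfrac{\beta\sqrt{1-|A|^2}}{|A|}$. Substituting these and $1-|A|^2=\tfrac12(1-c_{11})$ into $(\ref{tet1})$, $(\ref{tet2})$ and systems $(\ref{a1})$, $(\ref{a2})$ converts items 4 and 5 of Theorem \ref{mainn} verbatim into the formulas and systems $(\ref{k1})$, $(\ref{aa2})$. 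Item 1 comes from $|A|=0$ (distance $\pi$); item 3 from the boundary equality, giving $t=\pi\sqrt{\tfrac12(1-c_{11})}=\pi\sqrt{1-|A|^2}$; and item 2 from $|A|=1$, where $B=0$, $g=\exp(2\arg(A)\,k)$, and item 2 of Theorem \ref{mainn} with the same selection $\R(A)\ge0$ yields $(\ref{k0})$. Everything beyond the two obstacles noted above is substitution.
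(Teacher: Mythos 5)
Your proposal is correct and follows the paper's own route: lift through the two-sheeted covering $\Omega$, use Klein's formula to express $|A|$, $\R(A)$, $\I(A)$ through the $c_{ij}$, reduce $d(C,E)$ to the minimum of $\rho((A,B),e)$ and $\rho((-A,-B),e)$, and decide that minimum by the monotonicity lemmas for $F_1$, $F_2$, $t_1$, $t_2$. The only substantive divergence is the case $\R(A)>|A|\sin\left(\frac{\pi}{2}|A|\right)$, where you compare the two lift lengths directly via the ranges $\left[2\Arcsin\sqrt{1-|A|^2},\,\pi\sqrt{1-|A|^2}\right]$ and $\left[\pi\sqrt{1-|A|^2},\,2(\pi-\Arcsin\sqrt{1-|A|^2})\right]$ of Lemmas \ref{t1} and \ref{t2}, which overlap only at the endpoint; this is a valid and in fact shorter argument than the paper's derivation of $F_2(\beta_2)=-F_1(\beta_1)$ and $\beta_1^2>\beta_2^2$, while the lift comparison for $c_{11}=1$, which you only assert, follows just as easily from the monotonicity of $x\mapsto x(2\pi-x)$ on $[0,\pi]$ together with $|\Arg(-A)|=\pi-|\Arg(A)|$.
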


\begin{proof}

According to \cite{Post}, the mapping $\Omega_1$, which associates to an elemet $(A,B)$ of the group $SU(2)$ the quaternion $q=A+B{\bf j}$, is an isomorphism of the Lie group $SU(2)$ onto multiplicative group $Sp(1)$ of unit quaternion. Under identification of arbitrary point $(x,y,z)$ in $\mathbb{R}^3$ (with standard scalar product) with quaternion $x{\bf i}+y{\bf j}+z{\bf k},$ the mapping $\Omega_2$, which associates to a quaternion $q\in Sp(1)$  the transformation
$$x{\bf i}+y{\bf j}+z{\bf k}\rightarrow q\cdot(x{\bf i}+y{\bf j}+z{\bf k})\cdot\bar{q},$$
is an epimorphism of the Lie group $Sp(1)$ onto $SO(3)$ (see~\cite{Kl}, c.\,106). Therefore the mapping $\Omega=\Omega_2\circ\Omega_1$ is an epimorphism of $SU(2)$ onto $SO(3)$. The mapping $\Omega:(SU(2),\rho)\rightarrow (SO(3),d)$ is
a submetry \cite{BG} and local isometry. This follows from the fact that its differential $d\Omega(e)$ is an isomorphism of the Lie algebra $\frak{su}(2)$ onto the Lie algebra $\frak{so}(3)$, translating the orthonormal basis $(p_1,p_2)$ of 
$\Delta(e)$ to the orthonormal basis $(-b,a)$ of $D(e);$ in addition, the group 
$I(SO(2))$ of conjugations of element in $SO(3)$ by elements of subgroup 
$SO(2)\subset SO(3)$ is simultaneously a (sub)group of automorphisms of the Lie group $SO(3)$ and isometries of metric space $(SO(3),d),$ while its differential
$Ad(SO(2))=d(I(SO(2)))(e)$ is the group of isometric rotations of Euclidean plane 
$(D(e),\langle\cdot,\cdot\rangle)$ \cite{BerZub}. According to \cite{Kl},
 \begin{equation}
\label{matr}
\Omega(A,B)=\left(\begin{array}{ccc}
A^2_1+A^2_2-B^2_1-B^2_2 & 2(A_2B_1-B_2A_1) & 2(A_2B_2+B_1A_1) \\
2(A_2B_1+B_2A_1) & A^2_1-A^2_2+B^2_1-B^2_2 & 2(B_1B_2-A_1A_2) \\
2(A_2B_2-B_1A_1) & 2(B_2B_1+A_2A_1) & A^2_1-A^2_2-B^2_1+B^2_2
\end{array}\right),
\end{equation}
where
$$A_1=\R(A),\,\,A_2=\I(A),\,\,B_1=\R(B),\,\,B_2=\I(B).$$

Let $C=(c_{ij})\in SO(3)$.  In consequence of (\ref{matr}),
$$\R^2(A)-\I^2(A)=\frac{c_{22}+c_{33}}{2},\quad\mid A\mid^2=\frac{1+c_{11}}{2},\quad 4\R(A)\I(A)=c_{32}-c_{23},$$
whence
$$\mid\R(A)\mid=\frac{\sqrt{1+c_{11}+c_{22}+c_{33}}}{2},\quad
\mid\I(A)\mid=\frac{\sqrt{1+c_{11}-c_{22}-c_{33}}}{2},$$
$$\sgn(\R(A)\I(A))=\sgn(c_{32}-c_{23}).$$

Thus, any element $C=(c_{ij})$ of the group SO(3) is the image of two elements 
$(A,B)$, $(-A,-B)$ of the group $SU(2)$. Without loss of generality, we can assume that the element $(A,B)$ satisfies equations
\begin{equation}
\label{form1}
\R(A)=\frac{\sqrt{1+c_{11}+c_{22}+c_{33}}}{2},\quad\I(A)=\sgn(c_{32}-c_{23})\frac{\sqrt{1+c_{11}-c_{22}-c_{33}}}{2},
\end{equation}
\begin{equation}
\label{form2}
\mid A\mid=\sqrt{\frac{1+c_{11}}{2}}.
\end{equation}
Then $t=d(C,E)$ is equal to the lesser of distances 
$\rho((A,B),e)$ и $\rho((-A,-B),e)$.

Let us suppose that $c_{11}=-1$. It follows from (\ref{form2}) that $A=0$. On the ground of p.~1 in theorem~\ref{mainn} we get that $t=\pi$.

Let us prove p. 2 of theorem \ref{mainnn}. Let $c_{11}=1$. Then it follows from
(\ref{form2}) that $\mid A\mid=1$, $B=0$. On the ground of (\ref{mmm}), the proof of p.\,2 in theorem \ref{mainn}, and (\ref{form1}), the distance $t=d(C,E)$ is
equal to the lesser of numbers $t_i=2\pi/\sqrt{1+\beta_i^2}$, $i=1,2$,
where $\beta_1$ is unique solution of the system of equations (\ref{k0}),
$\beta_2$ is unique solution of the system of equations 
$$\left\{\begin{array}{l}
\cos\frac{\pi\beta}{\sqrt{1+\beta^2}}=\frac{\sqrt{1+c_{11}+c_{22}+c_{33}}}{2}, \\
\sin\frac{\pi\beta}{\sqrt{1+\beta^2}}=\sgn(c_{23}-c_{32})\frac{\sqrt{1+c_{11}-c_{22}-c_{33}}}{2}.
\end{array}\right.$$

It is clear that $F(\beta)=\frac{\pi\beta}{\sqrt{1+\beta^2}}$ is odd increasing function and $F(0)=0$. It follows from the last system of equations, inequality 
$\mid F(\beta)\mid < \pi$, and (\ref{k0}) that
$$\mid F(\beta_2)-F(\beta_1)\mid=\pi,\,\,\cos{F(\beta_1)}<0.$$
Therefore
$$F(\mid\beta_1\mid)=\mid F(\beta_1)\mid>\frac{\pi}{2}>\mid F(\beta_2)\mid=F(\mid\beta_2\mid).$$
Now it follows from increasing of the function $F(\beta)$ that 
$\mid\beta_1\mid>\mid\beta_2\mid$. Thus $t_1<t_2$ and $d(C,E)=t_1$.

Let suppose now that $-1<c_{11}<1$. Then (\ref{form2}) implies that 
$0<\mid A\mid<1$.

Now it is easy to see that
$$\R(A)=\mid A\mid\sin{\left(\frac{\pi}{2}\mid A\mid\right)}\,\,\Leftrightarrow\,\, \cos{\left(\pi\sqrt{\frac{1+c_{11}}{2}}\right)}=-\frac{c_{22}+c_{33}}{1+c_{11}},$$
because in consequence of the first equality and (\ref{form1}), (\ref{form2}),
$$\sin\left(\frac{\pi}{2}\mid A\mid\right)=\cos\arg(A)=\sqrt{\frac{1}{2}\left(1+\frac{c_{22}+c_{33}}{1+c_{11}}\right)},$$
$$\left|\cos\left(\frac{\pi}{2}\mid A\mid\right)\right|= \mid\sin\arg(A)\mid=\sqrt{\frac{1}{2}\left(1-\frac{c_{22}+c_{33}}{1+c_{11}}\right)}.$$

In this case by p.~3 of theorem~\ref{mainn} and (\ref{form2}),
$$d(C,E)=\pi\sqrt{1-\mid A\mid^2}=\pi\sqrt{\frac{1-c_{11}}{2}},$$
and p.~3 of theorem \ref{mainnn} is proved.

It follows from (\ref{form1}), (\ref{form2}) that
$$\R(A)>\mid A\mid\sin{\left(\frac{\pi}{2}\mid A\mid\right)}\,\,\Leftrightarrow\,\,
\cos{\left(\pi\sqrt{\frac{1+c_{11}}{2}}\right)}>-\frac{c_{22}+c_{33}}{1+c_{11}}.$$
In this case for computation of $t_1=\rho((A,B),e)$ one needs to apply p.\,4 of theorem \ref{mainn}, and for computation of $t_2=\rho((-A,-B),e)$ one needs to apply p.\,5 of theorem \ref{mainn}. Therefore $d(C,E)$ is equal to the lesser
of numbers $t_1$ and $t_2$, in addition
$$t_1=\frac{2}{\sqrt{1+\beta_1^2}}\Arcsin\sqrt{\frac{1}{2}(1-c_{11})(1+\beta_1^2)}\in\left(0,\frac{\pi}{\sqrt{1+\beta_1^2}}\right),$$
$$t_2=\frac{2}{\sqrt{1+\beta_2^2}}\left(\pi-\Arcsin\sqrt{\frac{1}{2}(1-c_{11})(1+\beta_2^2)}\right)\in\left(\frac{\pi}{\sqrt{1+\beta_2^2}},\frac{2\pi}{\sqrt{1+\beta_2^2}}\right),$$
where $\beta_1$ is unique solution of the system of equations (\ref{k1}),
$\beta_2$ is unique solution of the system of equations
\begin{equation}
\label{k2}
\tiny{\left\{\begin{array}{l}
\cos{\left(\frac{\beta}{\sqrt{1+\beta^2}}\left(\pi-\Arcsin\sqrt{\frac{1}{2}(1-c_{11})(1+\beta^2)}\right)+\Arcsin\left(\beta\sqrt{\frac{1-c_{11}}{1+c_{11}}}\right)\right)}=
\sqrt{\frac{1+c_{11}+c_{22}+c_{33}}{2(1+c_{11})}}, \\
\sin{\left(\frac{\beta}{\sqrt{1+\beta^2}}\left(\pi-\Arcsin\sqrt{\frac{1}{2}(1-c_{11})(1+\beta^2)}\right)+\Arcsin\left(\beta\sqrt{\frac{1-c_{11}}{1+c_{11}}}\right)\right)}=
\sgn(c_{23}-c_{32})\sqrt{\frac{1+c_{11}-c_{22}-c_{33}}{2(1+c_{11})}}.
\end{array}\right.}
\end{equation}
In consequence of (\ref{k1}), (\ref{k2}) and lemmas~\ref{f1}, \ref{f2}, we have $F_2(\beta_2)=-F_1(\beta_1)$, i.e.
$$F_1(\beta_1)+F_1(\beta_2)=-\frac{\pi\beta_2}{\sqrt{1+\beta_2^2}}.$$

If $\beta_2>0$ then $F_1(\beta_1)<-F_1(\beta_2)=F_1(-\beta_2)<0$. Now it follows from increasing of the function $F_1(\beta)$ (see lemma~\ref{f1}) that 
$\beta_1<-\beta_2<0$. Therefore $\beta_1^2>\beta_2^2$ and $t_1<t_2$.

If $\beta_2<0$ then $F_1(\beta_1)>-F_1(\beta_2)=F_1(-\beta_2)>0$. It follows from here and increasing of the function $F_1(\beta)$ (see lemma~\ref{f1}) that 
$\beta_1>-\beta_2>0$. Therefore $\beta_1^2>\beta_2^2$, whence $\frac{\pi}{\sqrt{1+\beta_1^2}}<\frac{\pi}{\sqrt{1+\beta_2^2}}$ and $t_1<t_2$. Consequently $d(C,E)=t_1$.

This finishes the proof of p.~4 in theorem~\ref{mainn}.

It follows from (\ref{form1}), (\ref{form2}) that
$$\R(A)<\mid A\mid\sin{\left(\frac{\pi}{2}\mid A\mid\right)}\,\,\Leftrightarrow\,\,
\cos{\left(\pi\sqrt{\frac{1+c_{11}}{2}}\right)}<-\frac{c_{22}+c_{33}}{1+c_{11}}.$$
Since $\R(A)\geq 0$, then $-\R(A)<\mid A\mid\sin{\left(\frac{\pi}{2}\mid A\mid\right)}$. In this case for computation of both $t_1=\rho((A,B),e)$ and 
$t_2=\rho((-A,-B),e)$ one needs to apply p.\,5 of theorem \ref{mainn}, i.e.
$$t_i=\frac{2}{\sqrt{1+\beta_i^2}}\left(\pi-\Arcsin\sqrt{\frac{1}{2}(1-c_{11})(1+\beta_i^2)}\right)\in\left(\frac{\pi}{\sqrt{1+\beta_i^2}},\frac{2\pi}{\sqrt{1+\beta_i^2}}\right),\,\,i=1,2,$$
where $\beta_1$ is unique solution of the system (\ref{aa2}),
$\beta_2$ is unique solution of the system of equations
\begin{equation}
\label{k3}
\tiny{\left\{\begin{array}{c}
\cos{\left(\frac{\beta}{\sqrt{1+\beta^2}}\left(\pi-\Arcsin\sqrt{\frac{1}{2}(1-c_{11})(1+\beta^2)}\right)+\Arcsin\left(\beta\sqrt{\frac{1-c_{11}}{1+c_{11}}}\right)\right)}=
\sqrt{\frac{1+c_{11}+c_{22}+c_{33}}{2(1+c_{11})}}, \\
\sin{\left(\frac{\beta}{\sqrt{1+\beta^2}}\left(\pi-\Arcsin\sqrt{\frac{1}{2}(1-c_{11})(1+\beta^2)}\right)+\Arcsin\left(\beta\sqrt{\frac{1-c_{11}}{1+c_{11}}}\right)\right)}=
\sgn(c_{23}-c_{32})\sqrt{\frac{1+c_{11}-c_{22}-c_{33}}{2(1+c_{11})}}.
\end{array}\right.}
\end{equation}
On the ground of (\ref{aa2}), (\ref{k3}), and lemma\ref{f2} (in particular, p. 3 of this lemma implies inequality $\mid F_2(\beta) \mid< \pi$) we get that
$\mid F_2(\beta_2)-F_2(\beta_1)\mid=\pi$. It follows from this equality, 
conditions $F_2(0)=0$, $\cos{F_2(\beta_1)}<0$, and lemma~\ref{f2} that
$$ F_2(\mid\beta_1\mid)=\mid F_2(\beta_1)\mid>\frac{\pi}{2}>\mid F_2(\beta_2)\mid=F_2(\mid\beta_2\mid).$$
Now it follows from increasing of the function $F_2(\beta)$ that 
$\mid\beta_1\mid>\mid\beta_2\mid$. It is remain to note that function
$t(\beta)=\frac{2}{\sqrt{1+\beta^2}}\left(\pi-\mbox{arcsin}\sqrt{\frac{1}{2}(1+\beta^2)(1-c_{11})}\right)$ decreases if $\beta>0$ and increases if $\beta<0$. Therefore $t_1=t(\mid\beta_1\mid)<t(\mid\beta_2\mid)=t_2$ and $d(C,E)=t_1$.

This finishes the proof of p.~5 and all theorem~\ref{mainnn}.

\end{proof}

\section{On the paper \cite{BR}}
\label{mis}

The following theorem is stated in the paper \cite{BR} by U.~Boscain and F.~Rossi 
(see also their paper \cite{BR1}).
\begin{theorem}
\label{bosc}
Let $g=(A,B)\in SU(2)$, $e=(1,0)$ is the unit of the group $SU(2)$. Then
$$\rho(g,e)=\left\{
\begin{array}{rl}
2\sqrt{\Arg(A)(2\pi-\Arg(A))}, & \mbox{если } B=0, \\
\psi(A), & \mbox{если } B\neq 0,
\end{array}\right.$$
where $\Arg(A)\in [0,2\pi]$, and $\psi(A)=t$ is unique solution of the system
\begin{equation}
\label{br}
\left\{\begin{array}{l}-\frac{\beta t}{2}+\Ar\left(\frac{\beta}{\sqrt{1+\beta^2}}\tg\frac{t\sqrt{1+\beta^2}}{2}\right)=\Arg(A), \\
\frac{\sin{\frac{t\sqrt{1+\beta^2}}{2}}}{\sqrt{1+\beta^2}}=\sqrt{1-\mid A\mid^2}, \\
t\in\left(0,\frac{2\pi}{\sqrt{1+\beta^2}}\right).
\end{array}\right.
\end{equation}
\end{theorem}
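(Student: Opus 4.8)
The plan is to extract both branches of the formula directly from the explicit coordinates (\ref{sistema})--(\ref{bb}) of a geodesic (\ref{geod2}) issuing from $e$, with the admissible range of the parameter pinned down by Proposition~\ref{bou}. If $g=(A,B)=\gamma(t)$ and $\gamma$ is a shortest arc, then $0<t\le 2\pi/\sqrt{1+\beta^2}$, so that $0<\frac{t\sqrt{1+\beta^2}}{2}\le\pi$ throughout; this single inequality is what will let me select the correct branch of each inverse trigonometric function along the way.

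First I would treat $B=0$. By (\ref{bb}) this forces $\sin\frac{t\sqrt{1+\beta^2}}{2}=0$, and within the admissible range only $\frac{t\sqrt{1+\beta^2}}{2}=\pi$ survives, whence $|A|=1$ and $t=2\pi/\sqrt{1+\beta^2}$. Substituting $\cos\frac{t\sqrt{1+\beta^2}}{2}=-1$ into (\ref{sistema}) collapses $A$ to $-\cos\frac{\beta t}{2}+i\sin\frac{\beta t}{2}$, i.e. $A=e^{i(\pi-\beta t/2)}$, so that $\Arg(A)=\pi(1-\frac{\beta}{\sqrt{1+\beta^2}})\in[0,2\pi]$. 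Solving this relation for $\frac{\beta}{\sqrt{1+\beta^2}}$ and inserting it into $t=2\pi/\sqrt{1+\beta^2}=2\pi\sqrt{1-(\beta/\sqrt{1+\beta^2})^2}$ gives, after the elementary identity $1-(1-\Arg(A)/\pi)^2=\Arg(A)(2\pi-\Arg(A))/\pi^2$, exactly $t=2\sqrt{\Arg(A)(2\pi-\Arg(A))}$.

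For $B\neq0$ one has the strict inequality $0<\frac{t\sqrt{1+\beta^2}}{2}<\pi$, and the computational heart is to recognise (\ref{sistema}) as a single complex identity $A=(Q+iP)\,e^{-i\beta t/2}$, where $Q=\cos\frac{t\sqrt{1+\beta^2}}{2}$ and $P=\frac{\beta}{\sqrt{1+\beta^2}}\sin\frac{t\sqrt{1+\beta^2}}{2}$. Taking moduli together with $|A|^2+|B|^2=1$ reproduces the second equation of (\ref{br}); taking arguments reproduces the first, once one writes $\Arg(Q+iP)=\Ar(P/Q)=\Ar\!\left(\frac{\beta}{\sqrt{1+\beta^2}}\tan\frac{t\sqrt{1+\beta^2}}{2}\right)$ and $\Arg(A)=\Arg(Q+iP)-\frac{\beta t}{2}$. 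Uniqueness of the pair $(\beta,t)$ solving (\ref{br}) I would obtain from the monotonicity apparatus of Lemmas~\ref{t1}--\ref{f2}: the left-hand argument function is strictly monotone in $\beta$ on the admissible interval, so at most one $\beta$ (hence one $t$ via the modulus relation, the second equation of (\ref{br})) can match the prescribed $\Arg(A)$.

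The step I expect to be the main obstacle is the passage $\Arg(Q+iP)=\Ar(P/Q)$. Since $P$ has the sign of $\beta$ while $Q=\cos\frac{t\sqrt{1+\beta^2}}{2}$, this identification of the argument with the principal value of $\Ar$ is legitimate precisely while $Q>0$, that is for $t<\pi/\sqrt{1+\beta^2}$; at $t=\pi/\sqrt{1+\beta^2}$ the real part $Q$ vanishes and thereafter changes sign, so on the upper portion $\bigl(\pi/\sqrt{1+\beta^2},\,2\pi/\sqrt{1+\beta^2}\bigr)$ of the range allowed by Proposition~\ref{bou} the true value of $\Arg(Q+iP)$ departs from $\Ar(P/Q)$ by an additive $\pm\pi$. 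Tracking this branch correctly over the entire admissible interval, rather than over its lower half alone, is the delicate point on which a genuinely global reading of (\ref{br}) hinges, and it is exactly here that one must proceed with the care announced in the abstract; this is where I would focus the most attention, in parallel with the case distinction ($t<\pi/\sqrt{1+\beta^2}$ versus $t>\pi/\sqrt{1+\beta^2}$) that already appears in parts~4 and~5 of Theorem~\ref{mainn}.
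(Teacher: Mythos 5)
Your derivation of the two equations of (\ref{br}) from (\ref{sistema})--(\ref{bb}) is sound, and your worry about the passage $\Arg(Q+iP)=\Ar(P/Q)$ is exactly on target --- but you should have pushed that worry to its conclusion rather than treating it as a delicate step still to be handled: theorem \ref{bosc} is \emph{false}, and the paper states it only in order to refute it (this is the ``mistake in formulas for distances on $(SU(2),\rho)$'' from \cite{BR} announced in the introduction and discussed in section \ref{mis}); no completion of your argument can succeed. The uniqueness claim for the system (\ref{br}) fails outright: take $\Arg(A)=0$ and $0<\mid B\mid<1$. Then $\beta=0$ satisfies the first equation of (\ref{br}) identically in $t$, while the second equation, $\sin{(t/2)}=\sqrt{1-\mid A\mid^2}\in(0,1)$, has the two roots $t=2\arcsin{\sqrt{1-\mid A\mid^2}}$ and $t=2\pi-2\arcsin{\sqrt{1-\mid A\mid^2}}$ in the admissible interval $(0,2\pi)$ --- precisely the counterexample the paper records immediately after the statement. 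Only the first root equals $\rho(g,e)$: it is what p.~4 of theorem \ref{mainn} returns here, since $\R(A)=\mid A\mid>\mid A\mid\sin{\left(\frac{\pi}{2}\mid A\mid\right)}$ forces $F_1(\beta)=0$, hence $\beta=0$, in (\ref{a1}).

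The specific step of your plan that collapses is the uniqueness argument. You propose ``at most one $\beta$, hence one $t$ via the modulus relation,'' but for fixed $\beta$ the modulus relation (the second equation of (\ref{br}), i.e. (\ref{mv})) generically has \emph{two} roots $t$ in $\left(0,\frac{2\pi}{\sqrt{1+\beta^2}}\right)$, one on each side of $\frac{\pi}{\sqrt{1+\beta^2}}$, and the monotonicity supplied by lemmas \ref{t1}--\ref{f2} is monotonicity in $\beta$ \emph{within a fixed branch}, not across branches. Moreover, as you yourself observed, on the upper branch $t>\frac{\pi}{\sqrt{1+\beta^2}}$ the principal value $\Ar(P/Q)$ differs from $\Arg(Q+iP)$ by $\pm\pi$, so the first equation of (\ref{br}) is not even the correct argument equation there; a single $\Ar$-based system cannot encode both branches, and this is not a reparable bookkeeping issue but the source of the spurious second solution above. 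The paper's repair is exactly the split you flagged at the end: replace (\ref{br}) by two $\Arcsin$-based systems, (\ref{a1}) for $t<\frac{\pi}{\sqrt{1+\beta^2}}$ and (\ref{a2}) for $t>\frac{\pi}{\sqrt{1+\beta^2}}$, with the explicit criterion $\R(A)\gtrless\mid A\mid\sin{\left(\frac{\pi}{2}\mid A\mid\right)}$ deciding which branch contains the distance (pp.~3--5 of theorem \ref{mainn}). Your $B=0$ branch, by contrast, is correct and agrees with p.~2 of theorem \ref{mainn}, the product $\Arg(A)(2\pi-\Arg(A))$ being insensitive to the convention $\Arg(A)\in[0,2\pi]$ versus $[-\pi,\pi]$.
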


Let us set $\mbox{arg}(A)=0$, $0<\mid B\mid<1$. Then it is clear that $\beta=0$ is a solution of the first equation of the system (\ref{br}) and the second equation
of the system (\ref{br}) gives two different solutions 
$t=2\mbox{arcsin}\sqrt{1-\mid A\mid^2}$ and $t=2\pi-2\mbox{arcsin}\sqrt{1-\mid A\mid^2}$, in the interval $(0,2\pi)$. This contradicts to the statement of 
theorem~\ref{bosc} on the uniqueness of solution for the system (\ref{br}).

There is the following theorem 13 in \cite{BR} about the cut locus of the unit element $e\in SU(2)$ for lens spaces $L(p,q),$ where $SO(3)$ is diffeomorphic to $L(2,1)$.

\begin{theorem}
\label{br1} 
The cut locus for sub-Riemannian problem on $L(p,q)$ is a stratification
$$K_{[e]}=K^{sym}_{[e]}\cup K^{loc}_{[e]}$$
with
\begin{equation}
\label{ks}
K^{sym}_{[e]}=\left\{(A,B)\in SU(2): \R(A)\geq 0, \frac{\I(A)^2}{\sin^2(\pi/p)}+B^2=1\right\},
\end{equation}
\begin{equation}
\label{kl}
K^{loc}_{[e]}=\{(A,0)\in SU(2): A^{p}\neq 1\}.
\end{equation} 
\end{theorem}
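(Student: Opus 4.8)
The plan is to reduce the whole question to the sub-Riemannian geometry of $(SU(2),\rho)$, which is already controlled by the geodesic formula (\ref{geod2}), the coordinate expressions (\ref{sistema}), (\ref{bb}), and the distance Theorem \ref{mainn}, and then to descend to the quotient. The lens space $L(p,q)$ is the orbit space of a free isometric action of $\mathbb{Z}_p$ on $(SU(2),\rho)$ generated by a deck transformation $\zeta$ lying in the right $SO(2)$-action that preserves $\rho$; this $\zeta$ fixes $|A|$ and $|B|$ and rotates $\Arg(A)$ by $2\pi/p$, and the twist $q$ affects only the phase of the $B$-coordinate, which will drop out of the modulus conditions below (this is why the answer depends on $p$ alone). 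Since $\rho$ descends, every geodesic of $L(p,q)$ lifts to a geodesic (\ref{geod2}) of equal length, and a lifted segment $\gamma|_{[0,t]}$ is minimizing in $L(p,q)$ exactly when $t=\min_{j}\rho(e,\zeta^{j}\gamma(t))$. The cut time along $\gamma$ is thus the first instant at which the minimizing lift stops being the unique shortest one, and by Corollary \ref{cor1} this can happen for only two reasons: a second geodesic of the same length reaches the same coset (the symmetric stratum), or the lift reaches a conjugate point already inside $SU(2)$ (the local stratum).

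For the symmetric stratum I would use the reflection symmetry of the endpoint map read off from (\ref{sistema}), (\ref{bb}): under $\beta\mapsto-\beta$, with a compensating shift of $\phi_0$, the quantities $\R(A)$, $|A|$, $|B|$ are preserved while $\I(A)$ changes sign (equivalently $\Arg(A)\mapsto-\Arg(A)$), and $\phi_0$ can be chosen to match $\Arg(B)$. Hence the geodesic reaching a point with $\Arg(A)=\alpha$ and its mirror image reach $\mathbb{Z}_p$-equivalent endpoints precisely when $-\alpha=\alpha-2\pi/p$, i.e. $|\Arg(A)|=\pi/p$; these are two distinct geodesics of equal length projecting to one point of $L(p,q)$, so this is a symmetric cut. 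Writing $|\Arg(A)|=\pi/p$ in coordinates through $\R(A)^2+\I(A)^2+|B|^2=1$ gives $|A|^2=\I(A)^2/\sin^2(\pi/p)$ and hence $\frac{\I(A)^2}{\sin^2(\pi/p)}+|B|^2=1$; the requirement that this be the first (hence shortest) tie selects the near branch $\R(A)\geq 0$ over the branch $\Arg(A)=\pm(\pi-\pi/p)$. This reproduces (\ref{ks}) (with $B^2$ read as $|B|^2$).

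For the local stratum I would locate the first conjugate time of (\ref{geod2}). Here the key observation is already contained in the proof of Proposition \ref{bou}: at $t=2\pi/\sqrt{1+\beta^2}$ one has $B(\gamma(t))=0$ and the endpoint $(A,0)$ is independent of $\phi_0$, so the whole $\phi_0$-family of geodesics with a fixed $\beta$ focuses on a single point of the vertical one-parameter subgroup $\{(A,0)\}$ --- a conjugate (focal) point. Thus the local cut points all lie on this circle. Passing to $L(p,q)$, a point $(A,0)$ is a genuine local cut point unless it is itself a lift of the base coset $[e]$; the lifts of $[e]$ on the vertical circle are the $\zeta^{j}(e)=(e^{2\pi i j/p},0)$, i.e. exactly the points with $A^{p}=1$, so excluding these yields $K^{loc}_{[e]}=\{(A,0):A^{p}\neq 1\}$, which is (\ref{kl}).

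Finally I would assemble the stratification: check that the two strata meet only in the two points $(e^{\pm i\pi/p},0)$ of the vertical circle, that along each geodesic the cut time equals the minimum of the symmetric-tie time and the conjugate time, and that for every point of $K^{sym}_{[e]}\cup K^{loc}_{[e]}$ the relevant event occurs first, so the cut locus is exactly this union. The main obstacle is the local part: rigorously proving that the first degeneracy of the exponential map (\ref{geod2}) occurs precisely on $\{(A,0)\}$, via the Jacobi equation or the Jacobian determinant in $(\phi_0,\beta,t)$, rather than merely exhibiting the focusing of Proposition \ref{bou}. A second, genuinely delicate point --- the one the introduction warns requires careful handling --- is the comparison deciding, for each endpoint, whether the symmetric tie or the conjugate point comes first; this comparison is what pins down the exact boundaries $\R(A)\geq 0$ in (\ref{ks}) and $A^{p}\neq 1$ in (\ref{kl}), and it is where the $\mathbb{Z}_p$ quotient interacts nontrivially with the optimality analysis on $SU(2)$.
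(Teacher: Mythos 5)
The first thing to note is that the paper contains no proof of Theorem \ref{br1} for you to be measured against: this is Theorem 13 of Boscain and Rossi \cite{BR}, quoted in Section \ref{mis} as an external result, which the authors then only specialize to $p=2$ and reformulate as Proposition \ref{prel} and Theorem \ref{inter}. So your proposal has to be judged on its own terms. As an outline it is reasonable and follows the route one would expect: descend $\rho$ along the free isometric $\mathbb{Z}_p$-action, classify cut points as either points where two distinct minimizers meet or first conjugate points, detect the first kind through the $\beta\mapsto-\beta$ reflection visible in (\ref{sistema})--(\ref{bb}), and detect the second through the $\phi_0$-focusing at $t=2\pi/\sqrt{1+\beta^2}$ already observed in the proof of Proposition \ref{bou}. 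Your translation of $|\Arg(A)|=\pi/p$ into the equation of (\ref{ks}) is correct. (A small imprecision: the deck transformation is a composition of a left and a right diagonal multiplication, not an element of the right $SO(2)$-action alone; it is still an isometry of $\rho$, which is all you need.)

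The genuine gaps are the following, and they are the mathematical content rather than routine verification. First, you assume that the only mechanism producing two equal-length geodesics to the same coset is the $\beta\mapsto-\beta$ reflection combined with a deck transformation; ruling out all other coincidences of the endpoint map requires an injectivity and monotonicity analysis in $(\phi_0,\beta,t)$ of exactly the kind that occupies Lemmas \ref{t1}--\ref{f2} in this paper, and without it the identification of $K^{sym}_{[e]}$ is an ansatz. Second, for $p>2$ you must determine which power $\zeta^j$ produces the \emph{first} tie; you only treat $j=1$. Third, the local stratum needs an actual first-conjugate-time computation: the focusing in Proposition \ref{bou} shows the endpoint is independent of $\phi_0$ at $t=2\pi/\sqrt{1+\beta^2}$, but not that this is the first degeneracy of the exponential map, nor that the geodesic remains minimizing in the quotient up to that time. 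Fourth, the concluding comparison deciding, pointwise, whether the symmetric tie or the conjugate time comes first is left as a to-do, and this is precisely the delicate optimality step: the authors stress in Section \ref{mis} that the companion distance formulas of \cite{BR} are erroneous even though the cut locus itself is correct, so this step cannot be waved through.
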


Evidently, in the case $p=2,$ the equation in (\ref{ks}) is equivalent to the identity $A_1=\R(A)=0,$ and inequality in (\ref{kl}) is equivalent to inequality $A_2=\I(A)\neq 0.$ Now, using the formula (\ref{matr}), we can restate this theorem as follows. 

\begin{proposition}
\label{prel}
The cut locus for sub-Riemannian metric on $(SO(3),d)$ is a stratification
$$K_{[E]}=K^{sym}_{[E]}\cup K^{loc}_{[E]}$$
with
\begin{equation}
\label{ks1}
K^{sym}_{[E]}=
\left\{\left(\begin{array}{ccc}
A^2_2-B^2_1-B^2_2 & 2A_2B_1 & 2A_2B_2 \\
2A_2B_1 & -A^2_2+B^2_1-B^2_2 & 2B_1B_2 \\
2A_2B_2 & 2B_2B_1 & -A^2_2-B^2_1+B^2_2
\end{array}\right)\right\},
\end{equation}
\begin{equation}
\label{kl1}
K^{loc}_{[E]}=
\left\{\left(\begin{array}{ccc}
1 & 0 & 0 \\
0 & \R(A^2) & -\I(A^2) \\
0 & \I(A^2) & \R(A^2)
\end{array}\right)\right\},
\end{equation}
where $B=0,$ $\I(A)\neq 0$ in (\ref{kl1}). 
\end{proposition}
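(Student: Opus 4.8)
The plan is to take the cited Theorem~\ref{br1} in the single case $p=2$, $q=1$, for which $L(2,1)$ is the lens space realizing $SO(3)=SU(2)/\{\pm E\}$, reduce its two defining strata to their simplest form, and then transport them from $SU(2)$ to $SO(3)$ through the covering epimorphism $\Omega$ by means of the explicit Klein formula (\ref{matr}). Since $\Omega(A,B)=\Omega(-A,-B)$, the map $\Omega$ is exactly the projection onto the quotient $SU(2)/\{\pm E\}\cong SO(3)$, so the cut locus of $E$ in $(SO(3),d)$ is the $\Omega$-image of the cut locus of $e$ already described by Theorem~\ref{br1}. Thus Proposition~\ref{prel} is a pushforward computation requiring no new metric input beyond Theorem~\ref{br1}.

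First I would record the two simplifications valid for $p=2$. Here $\sin^2(\pi/p)=1$, so the equation in (\ref{ks}) reads $\I(A)^2+|B|^2=1$; subtracting this from the unit-norm relation $\R(A)^2+\I(A)^2+|B|^2=1$ yields $\R(A)=A_1=0$, after which the side condition $\R(A)\geq 0$ holds automatically with equality. On the other stratum, where $B=0$ and hence $|A|=1$, writing $A=A_1+iA_2$ shows that $A^2\neq 1$ in (\ref{kl}) is equivalent to $A\neq\pm 1$, i.e. to $A_2=\I(A)\neq 0$. Consequently $K^{sym}_{[e]}$ is cut out in $SU(2)$ by the single equation $A_1=0$, and $K^{loc}_{[e]}$ by $B=0$, $A_2\neq 0$.

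Next I would substitute these conditions into (\ref{matr}). Imposing $A_1=0$ annihilates every term carrying the factor $A_1$, and the nine remaining entries collapse term by term to precisely the matrix displayed in (\ref{ks1}); no relation among $A_2,B_1,B_2$ beyond $A_2^2+B_1^2+B_2^2=1$ is imposed, which matches the unconstrained description of $K^{sym}_{[E]}$. For the local stratum I would set $B_1=B_2=0$ and use $A_1^2+A_2^2=1$: the $(1,1)$-entry becomes $A_1^2+A_2^2=1$, the first row and column vanish off the diagonal, and the remaining block equals $\left(\begin{array}{cc} A_1^2-A_2^2 & -2A_1A_2 \\ 2A_1A_2 & A_1^2-A_2^2 \end{array}\right)$, which I would identify with the rotation block of (\ref{kl1}) through the elementary identities $\R(A^2)=A_1^2-A_2^2$ and $\I(A^2)=2A_1A_2$; the constraint $A_2\neq 0$ translates into $\I(A^2)\neq 0$, consistent with the stated requirement $B=0$, $\I(A)\neq 0$.

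The only genuinely non-computational point, and the one I expect to need the most care, is the well-definedness of the transfer: one must check that each representative set from Theorem~\ref{br1} is $\Omega$-saturated, so that its $\Omega$-image is an unambiguous subset of $SO(3)$ and no stratum is split or merged in passing to the quotient. This is immediate here, since $A_1=0$ is preserved under $(A,B)\mapsto(-A,-B)$ and likewise $B=0$, $A_2\neq 0$ is preserved, so both strata are invariant under the deck transformation and descend cleanly; the local isometry and submetry properties of $\Omega$, together with $d\Omega(e)$ carrying $(p_1,p_2)$ to $(-b,a)$, then guarantee that these descended images are exactly the cut-locus strata of $(SO(3),d)$. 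With this verified, reading off (\ref{ks1}) and (\ref{kl1}) from the substituted matrices completes the proof.
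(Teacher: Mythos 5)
Your argument is correct and coincides with the paper's own (very brief) derivation: specialize Theorem~\ref{br1} to $p=2$ so that the two strata reduce to $\R(A)=0$ and to $B=0$, $\I(A)\neq 0$ respectively, then substitute these conditions into Klein's formula (\ref{matr}). The only slip is your closing remark that $A_2\neq 0$ ``translates into'' $\I(A^2)\neq 0$ --- it does not, since $\I(A^2)=2A_1A_2$ vanishes for $A=i$ even though $A_2\neq 0$ --- but this is immaterial because the proposition states the condition as $\I(A)\neq 0$, exactly as you derived it.
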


It is not difficult to prove that proposition \ref{prel} is equivalent
to the following theorem. 

\begin{theorem}
\label{inter}
The cut locus for sub-Riemannian metric on $(SO(3),d)$ is a stratification
$$K_{[E]}=K^{sym}_{[E]}\cup K^{loc}_{[E]}$$
with
\begin{equation}
\label{k1}
K^{sym}_{[E]}=
\left\{M\in SO(3)-E: M^2=E\right\},
\end{equation}
\begin{equation}
\label{l1}
K^{loc}_{[E]}=
\left\{\left(\begin{array}{cc}
1 & 0\quad 0 \\
(0\quad 0)^T & SO(2)\end{array}\right)\right\}-E.
\end{equation}
\end{theorem}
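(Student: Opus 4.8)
The plan is to verify directly, entry by entry using F.~Klein's formula (\ref{matr}), that the two strata described in Proposition~\ref{prel} coincide with those described in Theorem~\ref{inter}; the entire content of the asserted equivalence reduces to a pair of elementary linear-algebra identifications, so no new geometric input is needed beyond Proposition~\ref{prel}.

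First I would treat the symmetric stratum. On $K^{sym}_{[E]}$ one has $A_1=\R(A)=0$, and since $(A,B)\in SU(2)$ forces $A_2^2+B_1^2+B_2^2=1$, the vector $n=(A_2,B_1,B_2)^T$ is a unit vector of $\mathbb{R}^3$. Using this relation to rewrite each diagonal entry of (\ref{ks1}), e.g. $A_2^2-B_1^2-B_2^2=2A_2^2-1$, I would recognize that matrix as $2nn^T-E$. This transformation fixes $n$ and acts as $-E$ on $n^{\perp}$, hence it is the rotation through the angle $\pi$ about the axis $n$; in particular it lies in $SO(3)$, is distinct from $E$ (a rank-one $nn^T$ can never give $2nn^T-E=E$), and squares to $E$. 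Conversely, any $M\in SO(3)-E$ with $M^2=E$ is diagonalizable with eigenvalues among $\pm1$, and since $\det M=1$ it must have eigenvalues $1,-1,-1$; taking a unit $(+1)$-eigenvector $n$ yields $M=2nn^T-E$, and setting $(A_2,B_1,B_2)=n$, $A_1=0$ exhibits $M$ in the form (\ref{ks1}). Thus the two descriptions of $K^{sym}_{[E]}$ agree.

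Next I would treat the local stratum, where $B=0$, hence $|A|=1$, and $\I(A)\neq0$. Here $\R(A^2)^2+\I(A^2)^2=|A|^4=1$, so the lower $2\times2$ block in (\ref{kl1}) is an arbitrary element of $SO(2)$. As $A$ runs over the unit complex numbers with $A\neq\pm1$, the square $A^2$ runs over all unit complex numbers except $1$, so this block runs over $SO(2)$ with the identity removed. Therefore the matrices (\ref{kl1}) are precisely the block matrices $\left(\begin{smallmatrix}1&0\\0&R\end{smallmatrix}\right)$ with $R\in SO(2)$, other than $E$, which is exactly the description (\ref{l1}) of $K^{loc}_{[E]}$ in Theorem~\ref{inter}.

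The computation is routine; the only genuinely conceptual point, and thus the place where care is needed, is the converse direction for $K^{sym}_{[E]}$, namely the spectral observation that every nontrivial involution of $SO(3)$ is a half-turn $2nn^T-E$ and hence does arise from (\ref{ks1}) for a suitable choice of $(A_2,B_1,B_2)$ on the unit sphere. Once this description of involutions is in hand, both set equalities follow by direct substitution, and the equivalence of Proposition~\ref{prel} and Theorem~\ref{inter} is established.
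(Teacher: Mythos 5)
Your verification is correct: the identification of the symmetric stratum with the half-turns via $M=2nn^T-E$ for the unit vector $n=(A_2,B_1,B_2)^T$, and of the local stratum with the block-diagonal copy of $SO(2)$ minus $E$, is exactly the routine check that the paper omits (it says only that the equivalence of Proposition~\ref{prel} and Theorem~\ref{inter} ``is not difficult to prove''). Your spectral argument for the converse inclusion of $K^{sym}_{[E]}$ is the one nontrivial point, and you handle it correctly.
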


This is really very beautiful theorem. We note that the set (\ref{l1}) is the 
conjugate locus for $(SO(3),d).$

\end{document}